\numberwithin{equation}{section}
\newtheorem{Theorem}{Theorem}[section]
\newtheorem{Proposition}[Theorem]{Proposition}
\newtheorem{Lemma}[Theorem]{Lemma}
\newtheorem{Corollary}[Theorem]{Corollary}
{\theoremstyle{definition}
\newtheorem{Remark}[Theorem]{Remark}}
\begin{document}

\allowdisplaybreaks

\newcommand{\arXivNumber}{1508.02318}

\renewcommand{\PaperNumber}{072}

\FirstPageHeading

\ShortArticleName{Cohomology of the Moduli Space of Rank Two}

\ArticleName{Cohomology of the Moduli Space of Rank Two,\\ Odd Degree Vector Bundles over a Real Curve}

\Author{Thomas John BAIRD}

\AuthorNameForHeading{T.J.~Baird}

\Address{Department of Mathematics and Statistics, Memorial University of Newfoundland,\\
St.~John's, NL, A1C 5S7, Canada}
\Email{\href{mailto:tbaird@mun.ca}{tbaird@mun.ca}}
\URLaddress{\url{https://thomasjohnbaird.com}}

\ArticleDates{Received October 30, 2015, in f\/inal form July 20, 2016; Published online July 22, 2016}

\Abstract{We consider the moduli space of rank two, odd degree, semi-stable Real vector bundles over a real curve, calculating the singular cohomology ring in odd and zero characteristic for most examples.}

\Keywords{moduli space of vector bundles; gauge groups; real curves}

\Classification{53D30; 55R10; 55T20}

\section{Introduction} A \emph{real curve} $(\Sigma, \tau)$ is a closed Riemann surface $\Sigma$ of genus $g$, endowed with an anti-holomorphic involution $\tau$. The \emph{real points} of $(\Sigma, \tau)$ are those f\/ixed by $\tau$,
\begin{gather*}\Sigma^{\tau} = \{ p \in \Sigma\,|\, \tau(p)= p\}.\end{gather*}

 A \emph{Real $C^{\infty}$-vector bundle} $(E,\tilde{\tau})$ over $(\Sigma,\tau)$ is a complex $C^{\infty}$-vector bundle $E \rightarrow \Sigma$ equipped along with a smooth lift
\begin{gather*}\xymatrix{\ar[r]^{\tilde{\tau}} E \ar[d] & E \ar[d] \\
 \Sigma \ar[r]^{\tau} & \Sigma, }\end{gather*}
which is conjugate linear on f\/ibres and such that $\tilde{\tau} \circ \tilde{\tau} = \operatorname{Id}_E$. A \emph{Real holomorphic vector bundle} is a Real $C^{\infty}$-vector bundle equipped with a holomorphic structure for which $\tilde{\tau}$ is anti-holomorphic. \emph{Quaternionic}~$C^{\infty}$/\emph{holomorphic vector bundles} are def\/ined similarly, except that one requires $\tilde{\tau} \circ \tilde{\tau} = -\operatorname{Id}_E$, rather than $\operatorname{Id}_E$.

Fix a complex $C^{\infty}$-vector bundle $E \rightarrow \Sigma$ with rank $r$ and degree $d$ such that $\operatorname{gcd}(r,d) =1$. The moduli space of semi-stable complex vector bundles
\begin{gather*} M(E) = M(r,d)\end{gather*}
is a compact, connected K\"ahler manifold (indeed a projective variety), that parametrizes isomorphism classes of semi-stable holomorphic vector bundles of $C^{\infty}$-type $E$.

{\sloppy Similarly, f\/ix a Real/Quaternionic $C^{\infty}$-vector bundle $(E,\tilde{\tau})$ with coprime rank and degree. The \emph{moduli space of semi-stable Real}/\emph{Quaternionic vector bundles} \begin{gather*}M(E,\tilde{\tau}) = M(r,d,\tilde{\tau})\end{gather*}
is a compact, connected manifold that parametrizes isomorphism classes of semi-stable Real/Qua\-ter\-nio\-nic holomorphic vector bundles with $C^{\infty}$-isomorphism type $(E,\tilde{\tau})$. The forgetful map
\begin{gather*}\iota_{\tilde{\tau}}\colon \ M(r,d,\tilde{\tau}) \hookrightarrow M(r,d),\end{gather*}
embeds $M(r,d,\tilde{\tau})$ as a real, Lagrangian submanifold of $M(r,d)$.

}

Given a holomorphic vector bundle ${\mathcal E}$ over $\Sigma$, the \emph{conjugate pull-back} $\tau^*\overline{{\mathcal E}}$ is a holomorphic vector bundle over $\Sigma$ of the same degree. This induces an anti-holomorphic, anti-symplectic involution on $M(r,d)$ (which we also denote $\tau$) sending the equivalence class $[{\mathcal E}]$ to \begin{gather*}\tau([{\mathcal E}]) = [\tau^*\overline{{\mathcal E}}].\end{gather*} The f\/ixed point set $M(r,d)^{\tau}$ is a disjoint union of real, Lagrangian submanifolds. It was proven by Biswas--Huisman--Hurtubise \cite{BHH} and independently by Schaf\/fhauser~\cite{S11, S12}, that the embeddings $\iota_{\tilde{\tau}}$ produce a dif\/feomorphism
\begin{gather*} \coprod_{\tilde{\tau}} M(r,d,\tilde{\tau}) \cong M(r,d)^{\tau},\end{gather*}
where the coproduct is indexed by $C^{\infty}$-isomorphism types $\tilde{\tau}$ lifting $\tau$. Hereafter, we abuse notation and identify $M(r,d,\tilde{\tau})= M(E,\tilde{\tau})$ with its embedded image.

The ${\mathbb Z}_2$-Betti numbers of $M(r,d,\tilde{\tau})$ were calculated by Liu--Schaf\/fhauser \cite{LS} and independently by the author \cite{B} using the real Harder--Narasimhan stratif\/ication (described in Section~\ref{realstratsect}). In the current paper, we use this stratif\/ication to compute the cohomology ring $H^*(M(2,d,\tilde{\tau}); k)$ for rank two bundles of odd degree $d$, and coef\/f\/icient f\/ields $k$ of characteristic $\neq 2$, for most isomorphism types~$\tilde{\tau}$ (see Theorem~\ref{bigthm})\footnote{The condition that $d$ is odd forces $(\Sigma, \tau)$ to have real points and excludes the possibility that the bundle is quaternionic (see Remark~\ref{quatnotint}), facts we exploit in the proof of our main result Theorem~\ref{bigthm}. However, many of our key results (f\/irst part of Theorem~\ref{isomcoh}, Proposition~\ref{orientableProp} and Theorem \ref{bgring}) are independent of $d$ and provide information about the moduli stack underlying $M(2,d,\tilde{\tau})$ when $d$ is even. See for example Theorem~\ref{QuatDone}, on the moduli stack of Quatenionic bundles.}.

The approach follows the general lines of approach laid out by Atiyah and Bott \cite{AB} in the complex case. We have the space ${\mathcal C}^{\tilde{\tau}} = {\mathcal C}(E,\tilde{\tau})$ of Cauchy--Riemann operators on $E$ that commute with $\tilde{\tau}$. This is acted on by the group ${\mathcal G}^{\tilde{\tau}} = {\mathcal G}(E,\tilde{\tau})$ of gauge transformations that commute with $\tilde{\tau}$, and one is interested in the quotients under the action of this group. This leads us f\/irst to consider the ${\mathcal G}^{\tilde{\tau}}$-equivariant cohomology $H^*_{{\mathcal G}^{\tilde{\tau}}}$. For the total space of all operators, which is contractible, the equivariant cohomology is simply the singular cohomology of the classifying space $B{\mathcal G}^{\tilde{\tau}}$; one then must consider a stratif\/ication, and ``remove" certain unstable strata, to obtain the equivariant cohomology of the semistable stratum ${\mathcal C}_{ss}^{\tilde{\tau}} \subseteq {\mathcal C}^{\tilde{\tau}}$. A f\/inal step is to relate this equivariant cohomology to the ordinary cohomology of the moduli space $M(E,\tilde{\tau}) := {\mathcal C}_{ss}^{\tilde{\tau}}/ {\mathcal G}^{\tilde{\tau}}$.

One issue is that the unstable strata no longer necessarily have orientable normal bundles, and so the cohomology of the Thom space of these strata, which is their contribution to the global cohomology, is no longer given by the Thom isomorphism. In the case we consider, that of rank two bundles, this turns out to be an advantage instead of a handicap, because it forces the Thom spaces to be acyclic. This yields the following result.

\begin{Theorem}\label{isomcoh}
	Let $(E,\tilde{\tau})$ be a Real/Quaternionic $C^{\infty}$-vector bundle over a real curve $(\Sigma,\tau)$ of rank two. If the normal bundles of the unstable strata of the real Harder--Narasimhan stratification are all nonorientable, then there is a natural cohomology isomorphism
\begin{gather*}
 H^*(B{\mathcal G}(E,\tilde{\tau}); k) \cong H_{{\mathcal G}(E,\tilde{\tau})}^*({\mathcal C}_{ss}(E,\tilde{\tau}); k)
\end{gather*}
for coefficient fields $k$ of characteristic $\neq 2$. If additionally $E$ has odd degree, then
\begin{gather*}
H^*(B{\mathcal G}(E,\tilde{\tau}); k) \cong H^*(M(E,\tilde{\tau}); k).
\end{gather*}
\end{Theorem}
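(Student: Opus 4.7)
The plan is to follow the Atiyah--Bott strategy equivariantly, carefully tracking orientability throughout. Since $\mathcal{C}(E,\tilde{\tau})$ is affine and hence contractible, the Borel construction immediately gives $H^{*}_{\mathcal{G}(E,\tilde{\tau})}(\mathcal{C}(E,\tilde{\tau});k) \cong H^{*}(B\mathcal{G}(E,\tilde{\tau}); k)$, so the first claimed isomorphism reduces to showing that the open inclusion $\mathcal{C}_{ss}(E,\tilde{\tau}) \hookrightarrow \mathcal{C}(E,\tilde{\tau})$ induces an isomorphism in $\mathcal{G}(E,\tilde{\tau})$-equivariant cohomology. I would argue this inductively, removing the unstable strata one at a time in the order of the real Harder--Narasimhan stratification and applying the equivariant Thom--Gysin long exact sequence
\begin{gather*}
\cdots \to \tilde{H}^{*}_{\mathcal{G}^{\tilde{\tau}}}\bigl(\mathrm{Th}(N_\mu); k\bigr) \to H^{*}_{\mathcal{G}^{\tilde{\tau}}}\bigl(\mathcal{C}^{\tilde{\tau}}_{\leq\mu}; k\bigr) \to H^{*}_{\mathcal{G}^{\tilde{\tau}}}\bigl(\mathcal{C}^{\tilde{\tau}}_{<\mu}; k\bigr) \to \cdots
\end{gather*}
for each unstable stratum $\mathcal{C}_\mu^{\tilde{\tau}}$ with equivariant normal bundle $N_\mu$. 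The inductive step reduces to showing that each equivariant Thom space $\mathrm{Th}_{\mathcal{G}^{\tilde{\tau}}}(N_\mu)$ is $k$-acyclic in characteristic $\neq 2$.

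This acyclicity is the main obstacle and the heart of the rank-two argument. The stratum $(\mathcal{C}_\mu^{\tilde{\tau}})_{\mathcal{G}^{\tilde{\tau}}}$ is equivariantly homotopy equivalent to a classifying space $B\mathcal{L}_\mu^{\tilde{\tau}}$, where $\mathcal{L}_\mu^{\tilde{\tau}}$ is the Levi subgroup preserving the Harder--Narasimhan splitting; in rank two it is built from the two Real/Quaternionic line-bundle gauge groups and contains an obvious central $\mathbb{Z}_2$ generated by the diagonal $(+1,-1)$ scalars. By the Thom isomorphism with twisted coefficients, acyclicity amounts to the vanishing of $H^{*}(B\mathcal{L}_\mu^{\tilde{\tau}}; k_w)$, where $w = w_1((N_\mu)_{\mathcal{G}^{\tilde{\tau}}})$ is the equivariant orientation character. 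Nonorientability of $N_\mu$ is witnessed by $w$ restricting nontrivially to the central $\mathbb{Z}_2$ (which acts by $-1$ on $\det(N_\mu)$), so in the Serre spectral sequence of the fibration $B\mathbb{Z}_2 \to B\mathcal{L}_\mu^{\tilde{\tau}} \to B(\mathcal{L}_\mu^{\tilde{\tau}}/\mathbb{Z}_2)$ the fiber cohomology is $H^{*}(B\mathbb{Z}_2; k_{-})$, the group cohomology of $\mathbb{Z}_2$ with the sign representation. This vanishes identically once $2$ is invertible in $k$, forcing the abutment to be zero and proving the required acyclicity.

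The second isomorphism then follows from the hypothesis that $d$ is odd: every rank-two semistable bundle is in fact stable, and the Real automorphism group of a stable Real bundle is $\mathbb{R}^{*}$ (the real scalars commuting with $\tilde{\tau}$), which is homotopy equivalent to $\mathbb{Z}_2$. Hence $\mathcal{G}^{\tilde{\tau}}/\mathbb{R}^{*}$ acts freely on $\mathcal{C}_{ss}^{\tilde{\tau}}$ with quotient $M(E,\tilde{\tau})$, and the Borel construction is a fiber bundle
\begin{gather*}
B\mathbb{R}^{*} \to \bigl(\mathcal{C}_{ss}^{\tilde{\tau}}\bigr)_{\mathcal{G}^{\tilde{\tau}}} \to M(E,\tilde{\tau}).
\end{gather*}
Since $B\mathbb{R}^{*} \simeq B\mathbb{Z}_2$ has $k$-cohomology concentrated in degree zero whenever $\mathrm{char}\,k \neq 2$, the Leray--Serre spectral sequence collapses and yields $H^{*}_{\mathcal{G}^{\tilde{\tau}}}(\mathcal{C}_{ss}^{\tilde{\tau}}; k) \cong H^{*}(M(E,\tilde{\tau}); k)$, which combined with the first isomorphism completes the proof.
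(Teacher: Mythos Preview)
Your overall architecture matches the paper's: filter by the real Harder--Narasimhan stratification, use the Thom--Gysin sequence, reduce to showing each Thom space is $k$-acyclic, then handle the passage from $\mathcal{G}^{\tilde{\tau}}$-equivariant cohomology to ordinary cohomology of $M(E,\tilde{\tau})$ via the $B\mathbb{R}^{*}\simeq B\mathbb{Z}_2$ fibre. That last step and the framing of the first step are fine and essentially identical to the paper.

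The gap is in your acyclicity argument. You assert that nonorientability of $N_\mu$ is always witnessed by the central $\mathbb{Z}_2$ generated by $(+1,-1)$ in the Levi, because this element ``acts by $-1$ on $\det(N_\mu)$''. But $(+1,-1)$ acts by the scalar $-1$ on each fibre $H^{1}(L_1^{*}\otimes L_2)$, hence on the determinant by $(-1)^{r}$ with $r=\operatorname{rank}N_\mu=2d_1-d+g-1$. So $w_1$ restricts nontrivially to your central $\mathbb{Z}_2$ only when $r$ is odd, i.e.\ when $g+d$ is even. In the paper this is exactly the content of Lemma~\ref{g+deven}, which covers only sufficient condition~1 of Proposition~\ref{orientableProp}. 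When $g+d$ is odd the normal bundle can still be nonorientable (conditions~2 and~3, coming from the Okonek--Teleman computation of $w_1(\mathcal{L}^\tau)$ pulled back along~$\phi$), but then the orientation character is trivial on the $\mathbb{R}P^{\infty}$ direction and is carried entirely by the $(S^1)^{2g}$ factor. Your Serre spectral sequence for $B\mathbb{Z}_2 \to B\mathcal{L}_\mu^{\tilde{\tau}} \to B(\mathcal{L}_\mu^{\tilde{\tau}}/\mathbb{Z}_2)$ then has \emph{untwisted} fibre coefficients and gives no vanishing.

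What the paper does instead (Proposition~\ref{iszero}) is prove a uniform statement: for $X=(S^1)^{2g}\times\mathbb{R}P^{\infty}$ and \emph{any} nontrivial character $\rho\colon\pi_1(X)\to\mathbb{Z}_2$, the twisted cohomology $H^{*}(X;k_\rho)$ vanishes when $\operatorname{char}k\neq 2$. The proof goes via the double cover $\tilde{X}\to X$ determined by $\rho$: transfer gives $H^{*}(\tilde{X};k)\cong H^{*}(X;k)\oplus H^{*}(X;k_\rho)$, and since $X$ is a $K(\mathbb{Z}^{2g}\times\mathbb{Z}_2,1)$ every index-two subgroup yields $\tilde{X}$ with $H^{*}(\tilde{X};k)\cong H^{*}(X;k)$, forcing the twisted summand to vanish. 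This covers all sources of nonorientability at once, not just the one detected by the central scalar.
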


We prove that the hypotheses of Theorem~\ref{isomcoh} hold except in certain special cases (see Proposition~\ref{orientableProp}). This reduces our problem to calculating $H^*(B{\mathcal G}(E,\tilde{\tau}); k)$, which is done using an Eilenberg--Moore spectral sequence in Section~\ref{SScalcReal}. For odd degree bundles, this determines the cohomology ring for most isomorphism types.

\begin{Theorem}\label{bigthm}
Let $(\Sigma,\tau)$ be a real curve of genus $g\geq 2$, let $d$ be odd, and let $k$ be a field of odd or zero characteristic. Then the cohomology ring $ H^*(M(2,d,\tilde{\tau}); k)$ is an exterior algebra with~$g$ generators of degree~$1$ and $(g-1)$ of degree~$3$ for all but one exceptional $C^{\infty}$-type $\tilde{\tau}$ which occurs only when $(\Sigma,\tau)$ is a type~I curve of even genus.
If the genus $g \geq 4$, then for that one exceptional type, $H^*(M(2,d,\tilde{\tau}); k)$ is not an exterior algebra.
\end{Theorem}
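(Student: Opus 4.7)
The plan is to combine the abstract cohomological reduction of Theorem~\ref{isomcoh} with an explicit Eilenberg--Moore calculation of $H^*(B\mathcal{G}(E,\tilde{\tau}); k)$, handling the exceptional type separately at the end. Throughout, it is essential that $d$ is odd: as the footnote to the introduction points out, this forces $(\Sigma,\tau)$ to have real points and rules out $\tilde{\tau}$ being quaternionic.

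First, I would invoke Proposition~\ref{orientableProp} to classify the $C^{\infty}$-types $\tilde{\tau}$ for which every unstable stratum of the real Harder--Narasimhan stratification has nonorientable normal bundle. By the statement being proved, this holds for every $\tilde{\tau}$ except for one exceptional type, occurring only on a type~I curve of even genus. For all non-exceptional types, Theorem~\ref{isomcoh} yields a ring isomorphism $H^*(M(2,d,\tilde{\tau}); k) \cong H^*(B\mathcal{G}(E,\tilde{\tau}); k)$ for $\mathrm{char}(k) \neq 2$, and the problem reduces to computing the latter ring.

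Second, for the generic $\tilde{\tau}$ I would carry out the Eilenberg--Moore calculation of Section~\ref{SScalcReal}, patterned on Atiyah--Bott's determination of $H^*(B\mathcal{G}(2,d); k)$ in the complex case. The expected outcome is that, after taking $\tilde{\tau}$-invariants and accounting for the real classifying space structure, the classes contributed by a basis of the $\tilde{\tau}$-fixed part of $H^*(\Sigma;k)$ together with the universal Chern classes assemble to give exactly $g$ exterior generators in degree~$1$ and $g-1$ exterior generators in degree~$3$, with all even-degree polynomial generators eliminated. A convenient sanity check is $g + 3(g-1) = 4g-3 = \dim_{\mathbb{R}} M(2,d,\tilde{\tau})$, matching the top degree of the proposed exterior algebra. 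The real work is to verify that the surviving generators are genuinely multiplicatively free — not merely that the Poincar\'e polynomials agree — which requires controlling the Eilenberg--Moore differentials and an independent construction of enough cohomology classes (for instance by slant products with the universal bundle on $\Sigma \times M(2,d,\tilde{\tau})$) to realise them.

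For the exceptional $\tilde{\tau}$ with $g \geq 4$, at least one unstable stratum carries an orientable normal bundle, so the argument of Theorem~\ref{isomcoh} fails: the corresponding Thom class genuinely contributes to $H^*_{\mathcal{G}^{\tilde{\tau}}}(\mathcal{C}_{ss}^{\tilde{\tau}}; k)$. I would compute just enough of the resulting Poincar\'e series in low degree — most directly, $\dim H^2$ or the defect in antisymmetry of the cup product $H^1 \otimes H^1 \to H^2$ — to exhibit a discrepancy with the Poincar\'e series of the proposed exterior algebra. The hypothesis $g \geq 4$ is what ensures such a discrepancy is visible strictly below the middle dimension, where Poincar\'e duality cannot obscure it. The main obstacle throughout is Step~2: verifying that the Eilenberg--Moore spectral sequence collapses to a free exterior algebra with precisely the advertised generators, since merely matching Betti numbers would not pin down the ring structure.
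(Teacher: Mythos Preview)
Your overall architecture matches the paper's: combine Proposition~\ref{orientableProp}, Theorem~\ref{isomcoh}, and the computation of $H^*(B\mathcal{G}^{\tilde{\tau}};k)$ carried out in Section~\ref{SScalcReal} (Theorem~\ref{bgring}). Two points need correction.

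In Step~2 the paper does not compute $H^*(B\mathcal{G}^{\tilde{\tau}})$ by ``taking $\tilde{\tau}$-invariants'' of Atiyah--Bott classes or by slant products on the moduli space. It realises $B\mathcal{G}^{\tilde{\tau}}$ as the homotopy pull-back~\eqref{aoeurclorce2} along a product of maps $BLU_2^{\tilde{\tau}_i}\to BL_0U_2$ of classifying spaces of real loop groups, computes those maps on cohomology explicitly (Lemma~\ref{realloopgroupscoh}), and runs the Eilenberg--Moore spectral sequence of that square. Collapse and freeness follow because the surviving generators on the $E_2$-page lie in columns $0$ and $-1$ and so cannot support differentials; no external construction of classes is needed.

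Your treatment of the exceptional type contains a genuine gap: the mechanism is backwards. The Thom classes of the unstable strata live in degrees at least the minimal real codimension, which for $d$ odd is $2\cdot\frac{d+1}{2} - d + g - 1 = g$; for $g\geq 4$ they cannot touch $H^2$. The paper argues in the opposite direction. Because every unstable stratum has codimension $\geq g$, the Thom--Gysin sequences (applicable now that the normal bundles \emph{are} orientable) give $H^i(M(E,\tilde{\tau}))\cong H^i(B\mathcal{G}^{\tilde{\tau}})$ for all $i\leq g-2$. The obstruction to being an exterior algebra is already present in $B\mathcal{G}^{\tilde{\tau}}$: the exceptional $\tilde{\tau}$ is exactly special case~1 of Theorem~\ref{bgring}, where the Eilenberg--Moore $E_2$-page acquires the extra degree-$2$ class $z$ and $P_t(B\mathcal{G}^{\tilde{\tau}})=(1+t)^g(1+t^3)^g/(1-t^2)$, so that $b_2=\binom{g}{2}+1\neq\binom{g}{2}$. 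The role of $g\geq 4$ is simply that $2\leq g-2$; it has nothing to do with Poincar\'e duality or the middle dimension. (Note also that special case~2 of Theorem~\ref{bgring}, with $E^{\tilde{\tau}}$ orientable over every component of $\Sigma^\tau$, is excluded when $d$ is odd by the congruence in Proposition~\ref{arecbu}, which is why exactly one exceptional type remains.)
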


The one exceptional $C^{\infty}$-type referred to in Theorem \ref{bigthm} is distinguished by the property that $E^{\tilde{\tau}}$ is non-orientable over every path component of $\Sigma^{\tau}$.

At f\/irst encounter, Theorem \ref{bigthm} is a little disappointing. For most $C^{\infty}$-types, the rational cohomology ring is simply an exterior algebra that does not depend on the real structure $\tau$. In contrast, the ${\mathbb Z}_2$-cohomology has interesting Betti numbers that do depend on $\tau$. On might be tempted to conclude that the odd and zero characteristic cohomology contains no interesting information.

However, we are led to a dif\/ferent conclusion if we consider moduli spaces of bundles with f\/ixed determinant. Given a Real vector bundle $(E,\tilde{\tau})$, the determinant line bundle $\det(E)$ inherits a real structure from~$\tilde{\tau}$. This gives rise to a natural f\/ibre bundle map
\begin{gather*} \det \colon \ M(r,d,\tilde{\tau}) \rightarrow M(1,d,\tilde{\tau}).\end{gather*}

The f\/ibre $M_{\Lambda}(r,d,\tilde{\tau}):= \det^{-1}(\Lambda)$ is called the moduli space of Real vector bundles of f\/ixed determinant $\Lambda$. The (f\/inite) group, $T_r$, of $r$-th roots of the trivial Real line bundle acts on $M_{\Lambda}(r,d,\tilde{\tau})$ by tensor product. The following corollary follows from (and is equivalent to) Theorem~\ref{bigthm}.

\begin{Corollary}\label{BigCor}
Under the hypotheses of Theorem {\rm \ref{bigthm}}, the ring of $T_2$-invariants \begin{gather*} H^*(M_{\Lambda}(2,d,\tilde{\tau}); k)^{T_2}\end{gather*} is an exterior algebra on $(g-1)$ generators of degree~$3$ for all but one exceptional $C^{\infty}$-type $\tilde{\tau}$ which occurs only when $(\Sigma,\tau)$ is a type~I curve of even genus. If the genus $g \geq 4$, then for the one exceptional type, $H^*(M_{\Lambda}(2,d,\tilde{\tau}); k)$ is not an exterior algebra, as its first non-zero Betti number occurs in degree~$2$.	
\end{Corollary}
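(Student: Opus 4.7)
The plan is to derive Corollary~\ref{BigCor} from Theorem~\ref{bigthm} via the tensor-product presentation of $M(2,d,\tilde{\tau})$ over its determinant map. The multiplication map
\begin{gather*}
M(1,0,\tilde{\tau}) \times M_{\Lambda}(2,d,\tilde{\tau}) \longrightarrow M(2,d,\tilde{\tau}), \qquad (L,E) \mapsto L \otimes E,
\end{gather*}
is a finite Galois cover with deck group $T_2$ acting by simultaneous tensor product on both factors, and it sits over the squaring identification $M(1,0,\tilde{\tau})/T_2 \cong M(1,d,\tilde{\tau})$, $L \mapsto L^{\otimes 2} \otimes \Lambda$. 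The space $M(1,0,\tilde{\tau})$ is a compact connected real abelian Lie group of dimension $g$, whose cohomology over $k$ is an exterior algebra on $g$ classes in degree~$1$, and the translation action of $T_2 \subset M(1,0,\tilde{\tau})$ on its own cohomology is trivial.

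Because $|T_2|$ is invertible in $k$ (as $\operatorname{char}(k) \neq 2$), averaging over the deck group together with the K\"unneth formula produces an isomorphism of graded $k$-algebras
\begin{gather*}
H^*(M(2,d,\tilde{\tau}); k) \;\cong\; H^*(M(1,0,\tilde{\tau}); k) \otimes H^*(M_{\Lambda}(2,d,\tilde{\tau}); k)^{T_2}.
\end{gather*}
For the non-exceptional $C^{\infty}$-types, Theorem~\ref{bigthm} identifies the left-hand side as the exterior algebra on $g$ degree-$1$ and $(g-1)$ degree-$3$ generators. The first tensor factor contributes exactly the $g$ degree-$1$ generators, so $H^*(M_{\Lambda}; k)^{T_2}$ is forced to be the exterior algebra on $(g-1)$ generators in degree~$3$, establishing the first assertion of the corollary.

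For the exceptional $C^{\infty}$-type with $g \geq 4$, the same tensor decomposition combined with the exceptional case of Theorem~\ref{bigthm} shows that $H^*(M_{\Lambda}; k)^{T_2}$ cannot be the exterior algebra on $(g-1)$ degree-$3$ generators, so in particular $H^*(M_{\Lambda}; k)$ is not an exterior algebra. To refine this to the claim that the first non-zero Betti number occurs in degree~$2$, I would first match degree-$1$ parts on both sides of the isomorphism above to conclude $H^1(M_{\Lambda}; k)^{T_2} = 0$, and then invoke the fixed-determinant analogue of Theorem~\ref{isomcoh} together with the explicit low-degree output of the Eilenberg--Moore calculation of Section~\ref{SScalcReal} to produce a non-zero degree-$2$ class and upgrade the $H^1$ vanishing from invariants to the full cohomology. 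The principal obstacle is this final step: the tensor-product decomposition only controls the $T_2$-invariant part of $H^*(M_{\Lambda}; k)$, so locating the first non-trivial Betti number exactly in degree~$2$ requires a more hands-on analysis of the exceptional stratum's contribution to the fixed-determinant Eilenberg--Moore spectral sequence, rather than a purely formal consequence of the decomposition above.
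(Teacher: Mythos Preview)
Your approach via the tensor-product covering and the resulting isomorphism
\begin{gather*}
H^*(M(2,d,\tilde\tau);k)\;\cong\;H^*\big((S^1)^g;k\big)\otimes H^*(M_\Lambda(2,d,\tilde\tau);k)^{T_2}
\end{gather*}
is exactly the paper's argument, and the non-exceptional case is deduced from Theorem~\ref{bigthm} just as you describe. (A small point: the degree-zero real line bundle in the covering should carry the \emph{trivial} real structure~$\bar c$, not~$\tilde\tau$.)

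For the exceptional case, two remarks. First, the step ``$H^*(M_\Lambda)^{T_2}$ is not the exterior algebra on $(g{-}1)$ degree-$3$ generators, so $H^*(M_\Lambda)$ is not an exterior algebra'' is a non sequitur: invariants of an exterior algebra under a finite group need not themselves be exterior, so this implication does not hold as stated. Second, the ``principal obstacle'' you flag is not something the paper resolves by an additional fixed-determinant Eilenberg--Moore computation. The paper's entire proof of the exceptional clause is the sentence ``Corollary~\ref{BigCor} follows'' after establishing the tensor decomposition; what is being used implicitly are the low-degree Betti numbers computed in the \emph{proof} of Theorem~\ref{bigthm} (namely $b_1(M)=g$ and $b_2(M)=\binom{g}{2}+1$ when $g\ge4$), which under the decomposition give $\dim H^1(M_\Lambda)^{T_2}=0$ and $\dim H^2(M_\Lambda)^{T_2}=1$, hence a nonzero class in $H^2(M_\Lambda)$. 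Your caution about passing from $H^1(M_\Lambda)^{T_2}=0$ to $H^1(M_\Lambda)=0$ is legitimate, but you should not expect to find a further argument in the paper addressing it---there isn't one.
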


Calculations by the author (to appear elsewhere) show that $H^*(M_{\Lambda}(2,d, \tilde{\tau}); {\mathbb Q})$ contains interesting information depending not only on $\tau$, but also on $\tilde{\tau}$. Corollary~\ref{BigCor} implies that the interesting parts of $H^*(M_{\Lambda}(2,d,\tilde{\tau}); {\mathbb Q})$ are generally not invariant under the action by $T_2$. This is surprising, because for complex moduli spaces the analogous action is trivial: this was considered by Atiyah and Bott to be the main result of the famous Harder--Narasimhan paper~\cite{HN} (see \cite[Section~9]{AB}). We conclude that to understand the topology of $M(r,d,\tilde{\tau})$, it is important to study the f\/ixed determinant moduli spaces $M_{\Lambda}(r,d,\tilde{\tau})$. This is the focus of ongoing work by the author.

\section{Preliminaries}

\subsection{Topological classif\/ications}\label{Review}

Let us begin by recalling the possibilities for a real structure on a Riemann surface. The nomenclature is that of \cite{BHH}. The possible structures are:
\begin{itemize}\itemsep=0pt
\item {Type 0 curves}: On these, the real structure has no f\/ixed points.
\item{Type I curves}: For these, the real structure $\tau$ has $a\leq (g+1)$ f\/ixed circles such that the complement of the real points, $\Sigma \setminus \Sigma^{\tau}$, is disconnected. Necessarily, $a \equiv g+1$ $({\rm mod}~2)$.
\item{Type II curves}: For these, the real structure $\tau$ has $a \leq g$ f\/ixed circles such that $\Sigma \setminus \Sigma^{\tau}$ is connected.
\end{itemize}
For all of these, one can write $\Sigma$ as the union of two copies of a surface with boundary $\Sigma_0$, with the identif\/ication taken along their boundaries, with $\tau$ interchanging the two copies. This is described in more detail in Section~\ref{constructtheclasspace}.

If $(E,\tilde{\tau})\rightarrow (\Sigma, \tau)$ is a Real $C^{\infty}$-vector bundle of rank $r$, then the f\/ixed point set $E^{\tilde{\tau}}$ forms an ordinary Real vector bundle over $\Sigma^{\tau}$, with f\/ibre ${\mathbb R}^r$. Since $\Sigma^{\tau}$ is a disjoint union of circles, the isomorphism type of $E^{\tilde{\tau}}$ is completely determined by the f\/irst Stiefel--Whitney class $w_1(E^{\tilde{\tau}})$. In particular, for each path component $S^1 \subseteq \Sigma^{\tau}$ we have
\begin{gather*} w_1\big(E^{\tilde{\tau}}|_{S^1}\big)\big(S^1\big) = \begin{cases} 0 & \text{if $E^{\tilde{\tau}}|_{S^1}$ is orientable (hence trivial)}, \\
1 & \text{if $E^{\tilde{\tau}}|_{S^1}$ is nonorientable (hence a M\"obius bundle)}. \end{cases} \end{gather*}
It follows then that
\begin{gather*} w_1\big(E^{\tilde{\tau}}\big)\big(\Sigma^{\tau}\big) = \begin{cases} 0 & \text{if $E^{\tilde{\tau}}$ is nonorientable on an even number of path components}, \\ 1 & \text{if $E^{\tilde{\tau}}$ is nonorientable on an odd number of path components}. \end{cases} \end{gather*}

The classif\/ication of topological Real/Quaternionic vector bundles over a real curve $(\Sigma, \tau)$ is as follows (see Propositions~4.1 and~4.2 of~\cite{BHH}).

\begin{Proposition}\label{arecbu}
Topological Real vector bundles $(E,\tilde{\tau})$ over a real curve $(\Sigma, \tau)$ are classified up to isomorphism by rank $r$, degree $d$ and Stiefel--Whitney class $w_1(E^{\tilde{\tau}}) \in H^1(\Sigma^{\tau};{\mathbb Z}_2)$ subject to the condition that \begin{gather*}d \equiv w_1\big(E^{\tilde{\tau}}\big)\big(\Sigma^{\tau}\big) \quad {\rm mod}~2. \end{gather*}
In particular, if the fixed point set $\Sigma^{\tau}$ is a union of $a\geq 1$ disjoint circles, then there are $2^{a-1}$ isomorphism classes of Real $C^{\infty}$-vector bundles over $(\Sigma,\tau)$ of any fixed rank and degree.

Quaternionic vector bundles are classified by rank $r$ and degree $d$, subject to the condition
\begin{gather*}
d \equiv r(g-1) \quad {\rm mod}~2
\end{gather*}
and that $\Sigma^{\tau} = \varnothing$ if $r$ is odd.
\end{Proposition}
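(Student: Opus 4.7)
My plan is to use the standard reduction to a surface with boundary: write $\Sigma = \Sigma_0 \cup_{\Sigma^\tau} \bar\Sigma_0$, where $\Sigma_0$ is a compact surface with boundary $\partial\Sigma_0 = \Sigma^\tau$ and $\tau$ interchanges the two halves (as previewed in Section 2.1). A Real (resp.\ Quaternionic) $C^\infty$-bundle on $\Sigma$ is equivalent to the data of a complex $C^\infty$-bundle $\mathcal{E}_0 \to \Sigma_0$ together with a conjugate-linear involution (resp.\ anti-involution) $\tilde\tau_\partial$ on $\mathcal{E}_0|_{\partial\Sigma_0}$ used to glue $\mathcal{E}_0$ to its complex conjugate along the boundary. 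Since $\Sigma_0$ deformation retracts onto a $1$-complex, $\mathcal{E}_0$ is classified up to smooth isomorphism by its rank $r$ alone, so the classification reduces to (i)~enumerating the possible boundary (anti-)involutions on $S^1 \times \mathbb{C}^r$ up to gauge equivalence, and (ii)~determining the degree of the resulting bundle on $\Sigma$ from the clutching data.

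For step (i), the restriction to each circle $S^1 \subset \Sigma^\tau$ is trivial since $\pi_0 U(r) = 0$. A conjugate-linear involution on $S^1 \times \mathbb{C}^r$ is determined up to isomorphism by its $+1$-eigenbundle, a real rank $r$ subbundle of $S^1 \times \mathbb{C}^r$, and such bundles over $S^1$ are classified by $w_1 \in \mathbb{Z}_2$. A conjugate-linear anti-involution on $\mathbb{C}^r$ requires the fibre to carry a quaternionic structure, forcing $r$ to be even (hence $\Sigma^\tau = \varnothing$ when $r$ is odd), and in even rank there is, up to isomorphism, a unique such anti-involution on the trivial complex bundle over $S^1$. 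So the only boundary invariant is $w_1(E^{\tilde\tau}) \in H^1(\Sigma^\tau;\mathbb{Z}_2)$ in the Real case, and there is no boundary invariant in the Quaternionic case.

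For step (ii), I would reduce the Real case to the line bundle case by passing to determinants: $\det(E)$ is itself a Real line bundle with $\deg\det(E) = d$ and $w_1(\det(E)^{\tilde\tau}) = w_1(E^{\tilde\tau})$, so it suffices to prove the congruence for line bundles. On a Real line bundle the boundary clutching factors through $O(1) = \{\pm 1\} \subset U(1)$ on each circle, with sign recording whether $E^{\tilde\tau}$ is trivial or M\"obius there; a direct Mayer--Vietoris computation of $c_1$ then shows that $d \bmod 2$ equals the number of M\"obius components, giving the congruence $d \equiv w_1(E^{\tilde\tau})(\Sigma^\tau) \ \textup{mod}~2$. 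Existence of bundles realizing each allowed $(r,d,w_1)$ follows by taking a direct sum of a line bundle with chosen $w_1$ and a trivial Real bundle twisted to obtain the right $d$; counting the $2^a$ possible $w_1$ and halving by the parity constraint yields $2^{a-1}$ classes per $(r,d)$.

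The main obstacle is the Quaternionic degree parity $d \equiv r(g-1) \ \textup{mod}~2$, since in this case the boundary is either empty or carries no invariant, so the constraint is global rather than boundary-local. I would extract it as the obstruction in $H^2(\Sigma;\mathbb{Z}_2) \cong \mathbb{Z}_2$ to lifting a complex rank $r$ bundle of degree $d$ to a Quaternionic bundle: concretely, one computes that this obstruction equals $d + r(g-1) \ \textup{mod}~2$, for instance by comparing with the quaternionic structure on $K_\Sigma^{1/2} \otimes \mathbb{C}^r$ (which exists in the spin\textsuperscript{c} sense and has degree $r(g-1)$), or equivalently via the mod $2$ index of a twisted Cauchy--Riemann operator. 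This is the step where I would follow the computation of BHH most closely.
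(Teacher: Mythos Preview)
The paper does not give its own proof of this proposition; it simply cites Propositions~4.1 and~4.2 of Biswas--Huisman--Hurtubise~\cite{BHH}, so there is no in-paper argument to compare against. Your outline is in the same spirit as the BHH proof, which also works via a fundamental domain for~$\tau$.

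There is, however, a real gap in your setup. The decomposition $\Sigma = \Sigma_0 \cup_{\Sigma^\tau} \overline{\Sigma}_0$ with $\partial\Sigma_0 = \Sigma^\tau$ exists only for type~I curves, where $\Sigma \setminus \Sigma^\tau$ is disconnected. For type~II curves $\Sigma \setminus \Sigma^\tau$ is connected, and for type~0 curves $\Sigma^\tau = \varnothing$; in these cases the correct fundamental domain $\Sigma_0$ (as constructed later in Section~\ref{constructtheclasspace} of this paper) has additional boundary circles on which $\tau$ acts by the antipodal map rather than the identity. Your step~(i) therefore needs a third case, namely conjugate-linear (anti-)involutions on $S^1 \times \mathbb{C}^r$ covering the antipodal map of~$S^1$, and your Mayer--Vietoris computation of the degree in step~(ii) must incorporate these circles as well. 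As written, neither the classification nor the congruence $d \equiv w_1(E^{\tilde\tau})(\Sigma^\tau) \pmod 2$ is established outside type~I. A smaller issue: in step~(i) you classify boundary (anti-)involutions up to \emph{all} gauge transformations of $\mathcal{E}_0|_{\partial\Sigma_0}$, but a boundary gauge transformation extends over~$\Sigma_0$ only when its total winding number vanishes, and the residual~$\mathbb{Z}$ is precisely what records the degree~$d$; you use this implicitly in step~(ii), but it should be made explicit for the argument to be complete.
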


\begin{Remark}\label{quatnotint}
It follows from Proposition \ref{arecbu} that a rank two Quaternionic vector bundle must have even degree. This justif\/ies our greater focus on Real bundles, since we are more interested in bundles with coprime rank and degree. It also follows that odd degree Real bundles can only occur over curves with real points, so that $w_1(E^{\tilde{\tau}})$ may be non-zero. So for odd degree bundles, we need only consider curves of type I or II.
\end{Remark}

\subsection{The complex Harder--Narasimhan stratif\/ication}\label{HNstratC}
A rank two holomorphic bundle over a curve ${\mathcal E} \rightarrow \Sigma$ is called \emph{semi-stable} if it does not contains any line sub-bundle of degree greater than $d/2$, where $d$ is the degree of ${\mathcal E}$. If ${\mathcal E}$ is not semi-stable, then we say it is \emph{unstable}. Each unstable rank two bundle contains a unique line subbundle of maximum degree called the \emph{SCSS line sub-bundle} (strongly contradicting semi-stability, see Harder--Narasimhan~\cite{HN}).

Fix a $C^{\infty}$-vector bundle $E$ of rank two and degree $d$ over $\Sigma$. Let ${\mathcal C} ={\mathcal C}(E)$ denote the space of Cauchy--Riemann operators on $E$. This is a contractible manifold modelled on a Sobolev completion of $\Omega^{0,1}(\Sigma,\operatorname{End}(E))$. The Harder--Narasimhan stratif\/ication decomposes ${\mathcal C}$ into f\/inite codimension submanifolds
\begin{gather}\label{HNstrateqn}
 {\mathcal C} = {\mathcal C}_{ss} \cup \bigg( \bigcup_{d_1 > d/2} {\mathcal C}_{d_1}\bigg).
\end{gather}

Here ${\mathcal C}_{ss}$ is the subset of Cauchy--Riemann operators giving semi-stable holomorphic bundles; it is the open stratum. The set ${\mathcal C}_{d_1}$ is the subset of Cauchy--Riemann operators determining unstable bundles with SCSS line sub-bundle of degree $d_1$; it is a locally closed submanifold of complex codimension $(2d_1-d+g-1)$ in ${\mathcal C}$.\footnote{By work of Atiyah--Bott~\cite{AB} and Daskalopoulos~\cite{D}, (\ref{HNstrateqn}) is also the Morse stratif\/ication induced by the Yang--Mills functional, but we won't use this fact.} The complex gauge group ${\mathcal G} = {\mathcal G}(E)$ acts naturally on~${\mathcal C}$, preserving the stratif\/ication. The subgroup ${\mathbb C}^* \leq {\mathcal G}$ acts trivially and the quotient $\overline{{\mathcal G}} = {\mathcal G}/{\mathbb C}^*$ acts ef\/fectively on ${\mathcal C}$. Because ${\mathcal C}$ is contractible, the homotopy quotient $ {\mathcal C}_{h\overline{{\mathcal G}}}= E\overline{{\mathcal G}} \times_{\overline{{\mathcal G}}} {\mathcal C}$ is a~model for the classifying space $B \overline{{\mathcal G}}$.
\begin{gather*} B\overline{{\mathcal G}} = {\mathcal C}_{h\overline{{\mathcal G}}}.\end{gather*}
The stratif\/ication (\ref{HNstrateqn}) descends to a stratif\/ication
\begin{gather}\label{HNstratquot}
 B\overline{{\mathcal G}} = ({\mathcal C}_{ss})_{{h\overline{{\mathcal G}}} } \cup \bigg( \bigcup_{d_1 > d/2} ({\mathcal C}_{d_1})_{{h\overline{{\mathcal G}}} } \bigg).
\end{gather}
The (topological) moduli stack of semistable, rank two, degree $d$ bundles on $\Sigma$ is the homotopy quotient
\begin{gather*} {\mathcal M}(2,d) = {\mathcal M}(E) = ({\mathcal C}_{ss})_{{h\overline{{\mathcal G}}} } .\end{gather*}
If $d$ is odd, then $\overline{{\mathcal G}}$ acts freely on ${\mathcal C}_{ss}$ and we may identify $ {\mathcal M}(2,d)$ with the coarse moduli space \begin{gather*}M(2,d) = {\mathcal C}_{ss}/\overline{{\mathcal G}},\end{gather*} which is a complex manifold of complex dimension $4g-3$ when $g>1$ .

Now suppose $(\Sigma, \tau)$ is a real curve. Choose a real or quaternionic structure $\tilde{\tau}$ on the $C^{\infty}$-vector bundle $E$. Let $\hat{{\mathcal G}}$ be the group of transformations of $E$ generated by ${\mathcal G}$ and $\tilde{\tau}$. Note that~$\hat{{\mathcal G}}$ is independent of the choice of~$\tilde{\tau}$, because for any other choice $\tilde{\tau}'$, the composition $\tilde{\tau} \tilde{\tau}' \in {\mathcal G}$, so we have an equality of cosets $\tilde{\tau} {\mathcal G} = \tilde{\tau}'{\mathcal G}$. The natural action of $\hat{{\mathcal G}}$ on ${\mathcal C}$ preserves the stratif\/ication~(\ref{HNstrateqn}). This descends to a residual action of ${\mathbb Z}_2 = \hat{{\mathcal G}}/{\mathcal G}$ on $B\overline{{\mathcal G}}$ which preserves the stratif\/ication~(\ref{HNstratquot}) and acts by anti-holomorphic involutions on the strata (we denote this involution by $\tau$ by abuse of notation). In particular, this means that the normal bundles of strata~(\ref{HNstratquot}) are Real vector bundles with respect to~$\tau$.

\subsection{The real Harder--Narasimhan stratif\/ication}\label{realstratsect}

Let $\tilde{\tau}$ denote a real or quaternionic structure on $E$ and let ${\mathcal C}^{\tilde{\tau}} ={\mathcal C}(E,\tilde{\tau}) \subset {\mathcal C}(E)$ denote the subspace of Cauchy--Riemann operators that are invariant under $\tilde{\tau}$. It was explained in \cite{B} that the Harder--Narasimhan stratif\/ication determines a stratif\/ication of ${\mathcal C}^{\tilde{\tau}}${\samepage
\begin{gather}\label{rHNstrat}
 {\mathcal C}^{\tilde{\tau}} = {\mathcal C}_{ss}^{\tilde{\tau}} \cup \bigg( \bigcup_{d_1 > d/2} {\mathcal C}_{d_1}^{\tilde{\tau}} \bigg).
 \end{gather}
where ${\mathcal C}^{\tilde{\tau}}_{d_1} = {\mathcal C}_{d_1} \cap {\mathcal C}^{\tilde{\tau}}$ is a locally closed submanifold of real codimension $(2d_1-d+g-1)$.}

Def\/ine the \emph{real/quaternionic gauge group} ${\mathcal G}^{\tilde{\tau}} = {\mathcal G}(E,\tilde{\tau})$, to be the group of gauge transformations of $E$ that commute with $\tilde{\tau}$. The subgroup of scalars ${\mathbb R}^*$ act trivially on ${\mathcal C}^{\tilde{\tau}}$ and the quotient $\overline{{\mathcal G}^{\tilde{\tau}}} = {\mathcal G}^{\tilde{\tau}}/{\mathbb R}^*$ acts ef\/fectively on ${\mathcal C}(E,\tilde{\tau})$ preserving the stratif\/ication (\ref{rHNstrat}). Since ${\mathcal C}^{\tilde{\tau}}$ is contractible, we have a homotopy quotient
\begin{gather*} B\overline{{\mathcal G}^{\tilde{\tau}}} = {\mathcal C}_{h\overline{{\mathcal G}^{\tilde{\tau}}}}^{\tilde{\tau}}.\end{gather*}
and (\ref{rHNstrat}) descends to a stratif\/ication
\begin{gather}\label{rHNstratquot} B\overline{{\mathcal G}^{\tilde{\tau}}} = ({\mathcal C}_{ss}^{\tilde{\tau}})_{h\overline{{\mathcal G}^{\tilde{\tau}}}} \cup \bigg( \bigcup_{d_1 > d/2} ({\mathcal C}_{d_1}^{\tilde{\tau}})_{h\overline{{\mathcal G}^{\tilde{\tau}}}} \bigg). \end{gather}

The (topological) moduli stack ${\mathcal M}(E,\tilde{\tau})$ is the homotopy quotient
\begin{gather*} {\mathcal M}(E,\tilde{\tau}) = \big({\mathcal C}_{ss}^{\tilde{\tau}}\big)_{_{h\overline{{\mathcal G}^{\tilde{\tau}}}}}. \end{gather*} If $d$ is odd, $\overline{{\mathcal G}^{\tilde{\tau}}}$ acts freely and we identify ${\mathcal M}(E,\tilde{\tau})$ with the orbit space \begin{gather*}M(E,\tilde{\tau}) = {\mathcal C}_{ss}^{\tilde{\tau}}/ {\mathcal G}^{\tilde{\tau}}\end{gather*} which is a compact manifold of real dimension $4g-3$ when $g>1$ that embeds in $M(2,d)$ as a~path component of the f\/ixed point set $M(2,d)^{\tau}$ (see~\cite{S12}).

In some cases, the higher strata are actually empty.

\begin{Theorem}\label{QuatDone}
Suppose that $(E,\tilde{\tau}) \rightarrow (\Sigma, \tau)$ is a rank two $C^{\infty}$-quaternionic bundle for which the fixed point set $\Sigma^{\tau}$ is non-empty. Then the natural map $ {\mathcal M}(E,\tilde{\tau}) \hookrightarrow B\overline{{\mathcal G}^{\tilde{\tau}}} $ is a homotopy equivalence.
\end{Theorem}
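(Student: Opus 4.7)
The plan is to prove the stronger statement that the real Harder--Narasimhan stratification \eqref{rHNstrat} is trivial under these hypotheses, i.e.\ ${\mathcal C}^{\tilde\tau} = {\mathcal C}_{ss}^{\tilde\tau}$. Granted this, the homotopy quotients by $\overline{{\mathcal G}^{\tilde\tau}}$ coincide, yielding $\mathcal{M}(E,\tilde\tau) = B\overline{{\mathcal G}^{\tilde\tau}}$ directly from the definitions in Section~\ref{realstratsect}, with no further homotopy-theoretic input needed.

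To show that every higher stratum ${\mathcal C}_{d_1}^{\tilde\tau}$ with $d_1 > d/2$ is empty, I would argue by contradiction. A point of such a stratum corresponds to an unstable rank-two quaternionic holomorphic bundle $(\mathcal{E},\tilde\tau)$ with unique SCSS line subbundle $L \subset \mathcal{E}$ of degree $d_1$. Since $\tilde\tau$ is an anti-holomorphic bundle automorphism of $\mathcal{E}$, the image $\tilde\tau(L) \subset \mathcal{E}$ is again a holomorphic line subbundle, naturally isomorphic to $\tau^*\overline{L}$ and therefore of the same degree $d_1$. Uniqueness of the SCSS then forces $\tilde\tau(L) = L$, so $\tilde\tau|_L$ equips $L$ with the structure of a rank-one quaternionic holomorphic line bundle over $(\Sigma,\tau)$.

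It remains to rule out the existence of such an $L$ when $\Sigma^\tau \neq \varnothing$. Fix any $p \in \Sigma^\tau$; then $\tilde\tau|_{L_p}\colon L_p \to L_p$ is a conjugate-linear endomorphism of a one-dimensional complex vector space. Choosing a trivialisation $L_p \cong \mathbb{C}$, one writes $\tilde\tau|_{L_p}(z) = c\bar z$ for some $c \in \mathbb{C}$, from which $\tilde\tau|_{L_p}^2(z) = |c|^2 z$ is a non-negative real scalar and in particular cannot equal $-\mathrm{Id}$. This is the pointwise manifestation of the condition ``$\Sigma^\tau = \varnothing$ if $r$ is odd'' in Proposition~\ref{arecbu}.

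The only non-routine step is the $\tilde\tau$-invariance of the SCSS line subbundle, which combines uniqueness with the degree invariance of the conjugate pull-back; everything else is a short linear-algebra observation. I would emphasise that the argument is genuinely special to the quaternionic case: in the Real analogue $L$ would inherit a Real rather than quaternionic structure, which is unobstructed at real points, so unstable strata can and do appear there, and the analogue of Theorem~\ref{QuatDone} fails.
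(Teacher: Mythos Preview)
Your proposal is correct and follows essentially the same route as the paper's proof: show ${\mathcal C}^{\tilde\tau}_{ss} = {\mathcal C}^{\tilde\tau}$ by arguing that the SCSS line subbundle of an unstable quaternionic bundle would inherit a quaternionic structure, which is impossible over a fibre lying above a real point. The only difference is cosmetic: the paper outsources the $\tilde\tau$-invariance of the SCSS to \cite[Section~2.2]{B} and the nonexistence of rank-one quaternionic bundles to Proposition~\ref{arecbu}, whereas you spell out both arguments directly.
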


\begin{proof}
Suppose that ${\mathcal E}$ is an unstable rank two holomorphic bundle over $\Sigma$. If ${\mathcal E}$ were to admit a quaternionic structure lifting $\tau$, then this would restrict to a quaternionic structure on the SCSS line sub-bundle (see \cite[Section~2.2]{B}). But this contradicts Proposition~\ref{arecbu}, because $\Sigma^{\tau}$ is non-empty. It follows that every quaternionic lift of $\tau$ is semistable, so ${\mathcal C}^{\tilde{\tau}}_{ss} = {\mathcal C}^{\tilde{\tau}}$ and the result follows.
\end{proof}

We are interested in determining the orientability of the normal bundle in the stratif\/ica\-tion~(\ref{rHNstratquot}). The inclusion ${\mathcal C}^{\tilde{\tau}} \hookrightarrow {\mathcal C}$ respects the stratif\/ication, by construction, and is equivariant relative to the inclusion homomorphism ${\mathcal G}^{\tilde{\tau}} \hookrightarrow {\mathcal G}$, so it descends to a map
\begin{gather*} i\colon \ B \overline{{\mathcal G}^{\tilde{\tau}}} \rightarrow B\overline{{\mathcal G}}\end{gather*}
that respects the stratif\/ications (\ref{HNstratquot}) and (\ref{rHNstratquot}). The restriction of $i$ to a map between corresponding strata \begin{gather*}({\mathcal C}_{d_1}^{\tilde{\tau}})_{h\overline{{\mathcal G}^{\tilde{\tau}}}} \rightarrow ({\mathcal C}_{d_1})_{h\overline{{\mathcal G}}}\end{gather*} determines a homotopy equivalence between $({\mathcal C}_{d_1}^{\tilde{\tau}})_{h\overline{{\mathcal G}^{\tilde{\tau}}}}$ and the union of those path components of the f\/ixed point set $(({\mathcal C}_{d_1})_{h\overline{{\mathcal G}}})^{\tau}$ corresponding to $\tilde{\tau}$. This identif\/ies the normal bundles to strata in~(\ref{rHNstratquot}) with the real points of the pull-backs of the normal bundles of (\ref{HNstratquot}) equipped with the real structure from $\hat{{\mathcal G}}/{\mathcal G} = {\mathbb Z}_2$. Thus we can determine orientability of normal bundles in~(\ref{rHNstratquot}) by studying the normal bundles of strata~(\ref{HNstratquot}) with real structure~$\tau$.

\section{Orientability of normal bundles}\label{ornormbund}

The goal of this section is to prove the following.

\begin{Proposition}\label{orientableProp}
The normal bundles of the stratification \eqref{rHNstratquot} are all nonorientable if any of the following holds:
\begin{enumerate}\itemsep=0pt
\item[$1.$] The degree of the bundle and the genus of the curve are of the same parity.
\item[$2.$] The type of the curve is I, and the Stiefel--Whitney class of the bundle vanishes on at least one component of the invariant curve $\Sigma^{\tau}$.
\item[$3.$] The type of the curve is II.
\end{enumerate}
Conversely, if $\Sigma^{\tau} \neq \varnothing$ and none of the three conditions hold, then the normal bundles are all orientable.
\end{Proposition}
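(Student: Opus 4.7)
The plan is to work stratum-by-stratum and reduce the question to an explicit Stiefel--Whitney class computation. By the final paragraph of Section~\ref{realstratsect}, for each $d_1 > d/2$ the normal bundle of $({\mathcal C}_{d_1}^{\tilde\tau})_{h\overline{{\mathcal G}^{\tilde\tau}}}$ in (\ref{rHNstratquot}) is the real part, with respect to~$\tau$, of the pullback of the complex normal bundle $N_{d_1}$ of the stratum $({\mathcal C}_{d_1})_{h\overline{{\mathcal G}}}$ in (\ref{HNstratquot}). So the task is to decide, for each stratum and each component of its real part, whether $w_1$ of the resulting real vector bundle vanishes.

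Following Atiyah--Bott, $({\mathcal C}_{d_1})_{h\overline{{\mathcal G}}}$ deformation retracts onto a classifying space for pairs $(L,Q)$ of holomorphic line bundles of degrees $d_1$ and $d-d_1$ together with an extension class; up to contractible fibres this sits over $\operatorname{Pic}^{d_1}(\Sigma)\times\operatorname{Pic}^{d-d_1}(\Sigma)$, and $N_{d_1}$ is identified with the derived pushforward $R^1\pi_\ast({\mathcal L}^{-1}\otimes{\mathcal Q})$ of universal line bundles, of complex rank $2d_1-d+g-1$. The involution $\tau$ acts on this picture compatibly, and the real locus is a disjoint union of components of products of Real/Quaternionic Picard varieties, classified by the discrete data of Proposition~\ref{arecbu}, supporting real universal line bundles with well-understood mod-2 cohomology.

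Working on one such component at a time, I would compute $w_1((N_{d_1})^\tau)$ by a splitting-principle argument in the Real category, reducing it to Stiefel--Whitney classes of real line bundles arising from universal bundles over Real/Quaternionic Picard components. This should yield an explicit class depending on $d_1$, the parity of $g-d$, the type of $(\Sigma,\tau)$, and the distribution of $w_1(E^{\tilde\tau})$ across the components of $\Sigma^\tau$. The three sufficient conditions should then translate into this class being nonzero for every admissible $d_1$: (1)~the rank parity obstruction when $d \equiv g \pmod 2$, which makes the real rank $2d_1-d+g-1$ odd and feeds a $d_1$-independent contribution into $w_1(\det)=w_1$; (2)~a nontrivial restriction to a circle in $\Sigma^\tau$ on which $E^{\tilde\tau}$ is orientable, available for type~I curves with the stated hypothesis, producing a monodromy loop on which the universal bundle is detected; (3)~a global twist on type~II curves coming from the connectedness of $\Sigma\setminus\Sigma^\tau$. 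For the converse, when $\Sigma^\tau\neq\varnothing$ and none of the three conditions holds, the same formula should evaluate to zero for every $d_1$.

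I expect the main obstacle to be the bookkeeping in the explicit $w_1$ computation: the Real Picard splits into several components indexed by Real/Quaternionic type, and both the universal bundles and the derived pushforward must be tracked across these components for each $d_1$. The Real Grothendieck--Riemann--Roch-type formula one needs is straightforward in principle but delicate in practice, especially in distinguishing the three curve types and in ensuring that contributions from $d_1$ and from the discrete data interact in just the way the three clauses of the proposition prescribe. Once the formula is in hand, the case analysis, though tedious, is direct.
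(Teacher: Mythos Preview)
Your outline is on the right track and matches the paper's strategy in broad strokes: reduce to $w_1$ of the real normal bundle, handle condition~1 via the rank parity $2d_1-d+g-1$, and handle the rest by a Stiefel--Whitney computation on the real Picard components. But the substantive content of the proposition sits precisely in the part you label ``bookkeeping'' and defer. You propose to compute $w_1$ of $(R^1\pi_\ast)^{\tau}$ directly via a splitting principle or a Real Grothendieck--Riemann--Roch argument; that computation is genuinely nontrivial, and you have not carried it out. Without it, conditions~2 and~3 and the converse are unproven.

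The paper avoids doing this computation from scratch. Two structural reductions make the argument go through. First, since $w_1(N'^{\tau}) = w_1(\det(N')^{\tau})$, one only needs the real determinant line bundle. Second, and this is the key step you are missing, the map
\[
\phi\colon \operatorname{Pic}^{d_1}(\Sigma)\times\operatorname{Pic}^{d_2}(\Sigma)\to\operatorname{Pic}^{d_2-d_1}(\Sigma),\qquad ([D_1],[D_2])\mapsto [D_2-D_1],
\]
identifies $\det(N')$ with $\phi^*{\mathcal L}$, where ${\mathcal L}$ is exactly the determinant line bundle whose $w_1$ on each real Picard component was computed by Okonek--Teleman. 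Their result (Theorem~4.15 of \cite{OT}) gives the case split between type~I and type~II and singles out the one exceptional component, which one then matches to the topological type of $(E,\tilde\tau)$ via $w_1(E^{\tilde\tau})=w_1(L^{\tilde\tau_L})$. A final point you would also need: $\phi$ restricted to real components admits a section (translate by a fixed $[D_1]$), so $\phi^*$ is injective on cohomology and pulling back preserves nonorientability. Your proposed direct computation would, if completed, essentially reproduce the Okonek--Teleman formula; short of that, the proof is incomplete.
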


To understand the normal bundles of our real strata $({\mathcal C}_{d_1}^{\tilde{\tau}})_{h\overline{{\mathcal G}^{\tilde{\tau}}}}$, we identify them as ${\mathbb Z}_2$-f\/ixed point sets of the normal bundles for the complex strata $ ({\mathcal C}_{d_1})_{{h\overline{{\mathcal G}}} }$. This approach considers all the $C^{\infty}$-types for the lifts $\tilde{\tau}$ of $\tau$ at once, and we must be careful to identify which path components corresponds to which lifts $\tilde{\tau}$ .

For unstable complex strata, we have a homotopy equivalence
\begin{gather}\label{stratquot2} ({\mathcal C}_{d_1})_{h\overline{{\mathcal G}}} = \operatorname{Pic}^{d_1}(\Sigma) \times \operatorname{Pic}^{d_2}(\Sigma) \times {\mathbb C} P^{\infty} ,
\end{gather}
where $d_2 = d-d_1$. This can be explained as follows. Choose a decomposition
\begin{gather}\label{decompintolines} E = L_1 \oplus L_2
\end{gather} into a sum of $C^{\infty}$-line bundles $L_1$, $L_2$ of degrees $d_1$, $d_2$ respectively. Let ${\mathcal C}(L_i)$ denote the space of Cauchy--Riemann operators on~$L_i$. The gauge group ${\mathcal G}(L_i)$ acts naturally on ${\mathcal C}(L_i)$ with orbit space \begin{gather*}{\mathcal C}(L_i)/{\mathcal G}(L_i) = \operatorname{Pic}^{d_i}(\Sigma).\end{gather*}
This action is not free because the constant scalar transformations act trivially. Choose a base point $p_0 \in \Sigma$ and denote ${\mathcal G}_{\rm bas}(L_i) \subseteq {\mathcal G}(L_i)$ the subgroup of gauge transformations that act trivially on the f\/ibre above $p_0$. We have an internal direct product decomposition
\begin{gather*} {\mathcal G}(L_i) = {\mathcal G}_{\rm bas}(L_i) \times {\mathbb C}^* , \end{gather*}
where the subgroup of scalar transformations ${\mathbb C}^*$ acts trivially on ${\mathcal C}(L_i)$ and ${\mathcal G}_{\rm bas}(L_i)$ acts freely on ${\mathcal C}(L_i)$. The decomposition (\ref{decompintolines}) induces morphisms ${\mathcal C}(L_1) \times {\mathcal C}(L_2) \hookrightarrow {\mathcal C}(E)$ and \begin{gather*} {\mathcal G}_{\rm bas}(L_1) \times {\mathcal G}(L_2) \cong {\mathcal G}_{\rm bas}(L_1) \times {\mathcal G}_{\rm bas}(L_2) \times {\mathbb C}^* \hookrightarrow \overline{{\mathcal G}}, \end{gather*} and determines a homotopy equivalence of homotopy quotients~\cite[Section~7]{AB}
\begin{gather*}({\mathcal C}(L_1) \times {\mathcal C}(L_2))_{h ({\mathcal G}_{\rm bas}(L_1) \times {\mathcal G}_{\rm bas}(L_2) \times {\mathbb C}^*)} \cong ({\mathcal C}_{d_1})_{h\overline{{\mathcal G}}}. \end{gather*}
The subgroup ${\mathcal G}_{\rm bas}(L_1) \times {\mathcal G}_{\rm bas}(L_2)$ acts freely and ${\mathbb C}^*$ acts trivially, so the homotopy quotient~(\ref{stratquot2}) may be identif\/ied with the orbit space of ${\mathcal C}(L_1) \times {\mathcal C}(L_2) \times E{\mathbb C}^* $ under the product action by the group ${\mathcal G}_{\rm bas}(L_1) \times {\mathcal G}_{\rm bas}(L_2) \times {\mathbb C}^*$. That is,
\begin{gather*} ({\mathcal C}_{d_1})_{h\overline{{\mathcal G}}} \cong {\mathcal C}(L_1)/ {\mathcal G}_{\rm bas}(L_1) \times {\mathcal C}(L_2)/ {\mathcal G}_{\rm bas}(L_2) \times B {\mathbb C}^*\\
\hphantom{({\mathcal C}_{d_1})_{h\overline{{\mathcal G}}} }{} \cong \operatorname{Pic}^{d_1}(\Sigma) \times \operatorname{Pic}^{d_2}(\Sigma) \times {\mathbb C} P^{\infty}.
\end{gather*}

The action of $\tau$ on (\ref{stratquot2}) is a product action on each of the three factors. In terms of divisor classes $[D_i] \in \operatorname{Pic}^{d_i}(\Sigma)$ and a projective point $[v] \in {\mathbb C} P^{\infty}$ the action sends $ ([D_1], [D_2], [v])$ to $([\tau(D_1)], [\tau(D_2)], [\overline{v}])$ (this involution on the $\operatorname{Pic}^{d_i}(\Sigma)$ factors was studied by Gross--Harris~\cite{GH2}). The f\/ixed point set, denoted
\begin{gather}\label{strataformula}
\big(\operatorname{Pic}^{d_1}(\Sigma) \times \operatorname{Pic}^{d - d_1}(\Sigma) \times {\mathbb C} P^{\infty}\big)^{{\mathbb Z}_2} = \operatorname{Pic}^{d_1}(\Sigma)^{\tau} \times \operatorname{Pic}^{d - d_1}(\Sigma)^{\tau} \times {\mathbb R} P^{\infty}
\end{gather}
is a union of path components, each homeomorphic to
\begin{gather*} \big(S^1\big)^g \times \big(S^1\big)^g \times {\mathbb R} P^{\infty}.\end{gather*}
The dif\/ferent components correspond to the dif\/ferent $C^{\infty}$-types of the lift $\tilde{\tau}$ of the real struc\-tu\-re~$\tau$ to the bundle and of restrictions of $\tilde{\tau}$ to the SCSS line sub-bundle. These $C^{\infty}$-types are classif\/ied by Stiefel--Whitney classes according to Section~\ref{Review}. If $\Sigma^{\tau}$ has $a \geq 1$ components, then the f\/ixed point set~(\ref{strataformula}) has $2^{2a-2}$ components.

The normal bundle $N$ of $({\mathcal C}_{d_1})_{h\overline{{\mathcal G}}}$ is a complex vector bundle constructed in two stages as follows (for example, see the proof of Lemma~2 in~\cite{K}). Consider the vector bundle $N''$ over the Banach manifold ${\mathcal C}(L_1) \times {\mathcal C}(L_2)$ with f\/ibres given by sheaf cohomology groups \begin{gather*}N''_{(\bar{\partial}_1, \bar{\partial}_2)} = H^1\big( L_1^* \otimes L_2,\bar{\partial}_1^* \otimes \bar{\partial_2}\big). \end{gather*}
Since $L_1^* \otimes L_2$ has negative degree, it admits no holomorphic sections for any choice of Cauchy--Riemann operator. By Riemann--Roch, it follows that $N'' $ is a vector bundle of rank $(2d_1-d +g-1)$. The action of ${\mathcal G}_{\rm bas}(L_1) \times {\mathcal G}(L_2)$ lifts naturally to $N''$. The subgroup ${\mathcal G}_{\rm bas}(L_1) \times {\mathcal G}_{\rm bas}(L_2)$ acts freely, and the quotient yields a holomorphic vector bundle \begin{gather*}N' = N'' / {\mathcal G}_{\rm bas}(L_1) \times {\mathcal G}_{\rm bas}(L_2)\end{gather*} over $ \operatorname{Pic}^{d_1}(\Sigma) \times \operatorname{Pic}^{d_2}(\Sigma)$. The subgroup ${\mathbb C}^*$ acts with weight one on the f\/ibres of $N''$, hence also on $N'$, giving rise to a vector bundle \begin{gather*}N = N'_{h {\mathbb C}^*}= N' \times_{{\mathbb C}^*} E{\mathbb C}^*\end{gather*} over $\operatorname{Pic}^{d_1}(\Sigma) \times \operatorname{Pic}^{d_2}(\Sigma) \times {\mathbb C} P^{\infty}.$

Recall that $ N$ has a real structure $\tau$ def\/ined in Section~\ref{HNstratC}.

\begin{Lemma}\label{g+deven}
If $g+d$ is even, then the normal bundle $N^{\tau}$ is nonorientable on every path component of $({\mathcal C}_{d_1})_{h\overline{{\mathcal G}}}^{\tau}$.
\end{Lemma}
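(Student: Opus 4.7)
The plan is to exploit the tensor-product decomposition $N = N' \otimes \mathcal{L}$ coming from the two-stage construction, where $\mathcal{L}$ denotes the complex line bundle on $\mathbb{C}P^{\infty}$ on which $\mathbb{C}^{*}$ acts with weight one (i.e., the tautological line bundle up to sign). The real structure $\tau$ respects this decomposition: on the $\operatorname{Pic}^{d_1}(\Sigma)\times\operatorname{Pic}^{d_2}(\Sigma)$ factor it acts on $N'$, while on the $\mathbb{C}P^\infty$ factor it is complex conjugation $[v]\mapsto[\bar v]$ with fixed set $\mathbb{R}P^\infty$. Hence over a given path component of the fixed point set, $N^{\tau} \cong (N')^{\tau} \otimes \gamma$ as real vector bundles, where $\gamma := \mathcal{L}^{\tau}$ is the real tautological line bundle on $\mathbb{R}P^{\infty}$, whose first Stiefel--Whitney class is the generator $u \in H^1(\mathbb{R}P^\infty;\mathbb{Z}_2)$.

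Next I would use the standard formula for the first Stiefel--Whitney class of a tensor product with a line bundle: for a real vector bundle $V$ of rank $n$ and a real line bundle $L$,
\begin{gather*}
w_1(V\otimes L)=w_1(V)+n\,w_1(L).
\end{gather*}
Applied to our situation on a fixed component of $(\operatorname{Pic}^{d_1}(\Sigma)\times \operatorname{Pic}^{d_2}(\Sigma)\times\mathbb{C}P^{\infty})^{\tau}\cong (S^1)^g\times(S^1)^g\times\mathbb{R}P^{\infty}$, this gives
\begin{gather*}
w_1(N^{\tau})=w_1\big((N')^{\tau}\big)+n\cdot w_1(\gamma),
\end{gather*}
where $n$ is the real rank of $(N')^{\tau}$, which equals the complex rank of $N'$, namely $2d_1-d+g-1$.

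To detect nonorientability I pull back along the inclusion of the $\mathbb{R}P^{\infty}$ factor. Since $N'$ is pulled back from $\operatorname{Pic}^{d_1}(\Sigma)\times\operatorname{Pic}^{d_2}(\Sigma)$, the class $w_1((N')^{\tau})$ has no $\mathbb{R}P^{\infty}$-component, so on this factor
\begin{gather*}
w_1(N^{\tau})\big|_{\mathbb{R}P^{\infty}}=n\cdot u \in H^1(\mathbb{R}P^{\infty};\mathbb{Z}_2).
\end{gather*}
The hypothesis $g+d$ even makes $n=2d_1-d+g-1$ odd, so $n\cdot u=u\neq 0$. Hence $w_1(N^{\tau})\neq 0$ on every path component, which is the desired nonorientability.

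The only genuinely delicate point is the identification of the weight-one $\mathbb{C}^{*}$-eigenbundle $\mathcal{L}\to\mathbb{C}P^{\infty}$ and its real form: one must verify that the $\tau$-action on $N$ induced from the residual $\mathbb{Z}_2=\hat{\mathcal{G}}/\mathcal{G}$-action restricts to ordinary complex conjugation on the $\mathbb{C}^{*}$-factor (so that $\mathcal{L}^{\tau}=\gamma$ really is the nonorientable tautological line bundle). Once that compatibility is in hand, the rest of the argument is a bookkeeping of ranks modulo two.
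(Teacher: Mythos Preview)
Your proposal is correct and follows essentially the same approach as the paper. The paper restricts $N^{\tau}$ to $\{x\}\times B\mathbb{R}^{*}$ for a fixed $\tau$-invariant point $x$ and observes directly that, since $\mathbb{C}^{*}$ acts with weight one, this restriction is $\operatorname{rank}(N')$ copies of the tautological line bundle over $\mathbb{R}P^{\infty}$; your version packages the same observation via the tensor decomposition $N\cong \pi_{1}^{*}N'\otimes\pi_{2}^{*}\mathcal{L}$ and the formula $w_{1}(V\otimes L)=w_{1}(V)+\operatorname{rank}(V)\,w_{1}(L)$, then restricts to the $\mathbb{R}P^{\infty}$ factor---the underlying idea (parity of $\operatorname{rank}(N')=2d_{1}-d+g-1$) is identical.
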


\begin{proof}
Let $x$ be a $\tau$-f\/ixed point in $\operatorname{Pic}^{d_1}(\Sigma) \times \operatorname{Pic}^{d_2}(\Sigma)$ and consider the restriction of $N^{\tau}$ to $\{x\} \times B {\mathbb R}^*$. Because ${\mathbb C}^*$ acts by scalar multiplication of weight one on $N'$, we have an isomorphism \begin{gather*}N^{\tau}|_{\{x\} \times B {\mathbb R}^*} = N'^{\tau'}_x \times_{{\mathbb R}^*} E {\mathbb R}^*, \end{gather*} which is isomorphic to the Whitney sum of $\operatorname{rank}(N')$ copies of the tautological bundle over $B {\mathbb R}^* = {\mathbb R} P^{\infty}$. This is nonorientable if and only if $\operatorname{rank}(N')$ is odd, which is true if and only if $g+d$ is even.
\end{proof}

The involution $\tau$ on $N$ lifts to an involution $\tau'$ of $N'$ by identifying $N'$ with the restriction of~$N$ to $\operatorname{Pic}^{d_1}(\Sigma) \times \operatorname{Pic}^{d_2}(\Sigma) \times \{[v]\}$ for some f\/ixed point $\{[v]\} \in {\mathbb R} P^{\infty} \subset {\mathbb C} P^{\infty}$. If the degree $d$ and the genus $g$ have dif\/ferent parity, then $N^{\tau}$ is nonorientable if and only if $N'^{\tau}$ is nonorientable, or equivalently the f\/irst Stiefel--Whitney class $w_1(N'^{\tau})$ does not vanish. The equalities{\samepage \begin{gather*}w_1(N'^{\tau}) = w_1(\det(N'^{\tau})) = w_1(\det(N')^{\tau})\end{gather*} permits us to work with the determinant line bundle~$\det(N')$.}

The real structure on $N'$ induces one on $\det(N')$. Since $\det(N')$ is a holomorphic line bundle, this real structure is unique up to composition with a unit scalar (an analogue of Schur's lemma, see~\cite{BHH} or~\cite{OT}), so the $C^{\infty}$-type of the lift is unique. Thus if we construct any real structure on $\det(N')$, it must coincide with the one induced by~$\tau$ up to $C^{\infty}$-isomorphism. Such a real structure has been carefully studied by Okonek--Teleman~\cite{OT} (see also Cretois~\cite{C}).

\begin{Lemma}\label{themapphi}
Let
\begin{gather*}\phi\colon \ \operatorname{Pic}^{d_1}(\Sigma) \times \operatorname{Pic}^{d_2}(\Sigma) \rightarrow \operatorname{Pic}^{d_2-d_1}(\Sigma)\end{gather*}
be the map on divisor classes $ \phi([D_1], [D_2]) = [D_2-D_1]$. Then there is an isomorphism of holomorphic line bundles \begin{gather*} \det(N') \cong \phi^*( {\mathcal L}), \end{gather*}
where ${\mathcal L} := \operatorname{det \, ind} \delta_{p_0}^L$ is the determinant line bundle considered by Okonek--Teleman {\rm \cite[Section~1]{OT}}.
\end{Lemma}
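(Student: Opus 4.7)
The plan is to realize $\det(N')$ as the pullback along $\phi$ of the Okonek--Teleman determinant of cohomology line bundle on $\operatorname{Pic}^{d_2-d_1}(\Sigma)$, using functoriality of the determinant of index construction under base change.

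First I would reinterpret $N'$ as the descent of a universal determinant. Over $\mathcal{C}(L_1) \times \mathcal{C}(L_2) \times \Sigma$ the $C^{\infty}$-line bundle $L_1^* \otimes L_2$ carries the tautological Cauchy--Riemann operator $\bar{\partial}_1^* \otimes \bar{\partial}_2$. This data is equivariant under the free action of ${\mathcal G}_{\rm bas}(L_1) \times {\mathcal G}_{\rm bas}(L_2)$, so it descends to a holomorphic family of degree $d_2-d_1$ line bundles on $\Sigma$ parametrized by $\operatorname{Pic}^{d_1}(\Sigma) \times \operatorname{Pic}^{d_2}(\Sigma)$. The restriction of this family to $\{\cdot\} \times \{p_0\}$ is canonically trivial, by the very definition of ${\mathcal G}_{\rm bas}$. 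By the universal property of the Picard variety, this family is then canonically isomorphic to $\phi^* \mathcal{P}$, where $\mathcal{P}$ is the Poincar\'e line bundle on $\operatorname{Pic}^{d_2-d_1}(\Sigma) \times \Sigma$ normalized at $p_0$.

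Second I would apply base change for the determinant of index. Because $L_1^* \otimes L_2$ has strictly negative degree, $H^0$ vanishes at every point of $\mathcal{C}(L_1)\times \mathcal{C}(L_2)$, the index complex is concentrated in degree one, and the fiberwise identification $N''_{(\bar{\partial}_1,\bar{\partial}_2)} = H^1(L_1^* \otimes L_2, \bar{\partial}_1^* \otimes \bar{\partial}_2)$ shows that $\det(N')$ is (up to the inversion convention absorbed into $\operatorname{det\,ind}$) the descent of the determinant of cohomology of the family just constructed. Since the Okonek--Teleman line bundle $\mathcal{L} = \operatorname{det\,ind}\delta_{p_0}^L$ is by definition the same construction applied to $\mathcal{P}$, the naturality of $\operatorname{det\,ind}$ under the Cartesian square identifying our family with $\phi^*\mathcal{P}$ yields $\det(N') \cong \phi^*\mathcal{L}$.

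The main obstacle I anticipate is bookkeeping the normalizations. The Poincar\'e bundle is unique only up to twist by a line bundle pulled back from the base, and a spurious twist would alter $\phi^*\mathcal{L}$ as a holomorphic line bundle on $\operatorname{Pic}^{d_1}(\Sigma)\times \operatorname{Pic}^{d_2}(\Sigma)$. One must verify that the based-gauge construction produces exactly the normalization at $p_0$ chosen in \cite{OT}; this is precisely the reason for singling out ${\mathcal G}_{\rm bas}$ rather than the full gauge group. In addition, I would check (or extract from \cite{OT}) the sign convention for $\operatorname{det\,ind}$ in the negative-degree regime to confirm that the isomorphism is as stated, rather than with a dual.
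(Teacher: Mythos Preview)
Your proposal is correct, but the paper's argument is more direct and in fact proves something slightly stronger. Rather than descending the family of line bundles to $\operatorname{Pic}^{d_1}(\Sigma)\times\operatorname{Pic}^{d_2}(\Sigma)$ and invoking the universal property of the Poincar\'e bundle, the paper works entirely at the level of Cauchy--Riemann operators: it introduces the bundle $V''\to\mathcal{C}(L_1^*\otimes L_2)$ with fibres $H^1(L_1^*\otimes L_2,\bar\partial)$, observes that $N''$ is the pullback of $V''$ under the obvious map $\mathcal{C}(L_1)\times\mathcal{C}(L_2)\to\mathcal{C}(L_1^*\otimes L_2)$, and notes that this map is equivariant for based gauge groups and descends precisely to $\phi$. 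Hence $N'\cong\phi^*V'$ as vector bundles, and taking determinants gives the result with $\mathcal{L}=\det(V')$. This bypasses the Poincar\'e bundle altogether and so avoids the normalization bookkeeping you anticipated; your route through $\phi^*\mathcal{P}$ and base change for $\operatorname{det\,ind}$ works too, but requires the extra verification that the based-gauge normalization matches the one in \cite{OT}, whereas the paper's argument makes that automatic.
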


\begin{proof}
Consider the bundle $V''$ over ${\mathcal C}(L_1^*\otimes L_2)$ with f\/ibres $V''_{\bar{\partial}} = H^1(L_1^* \otimes L_2, \bar{\partial})$. Clearly~$N''$ is the pull-back of~$V''$ under the map ${\mathcal C}(L_1) \times {\mathcal C}(L_2) \rightarrow {\mathcal C}(L_1^* \otimes L_2)$ that sends $(\bar{\partial}_1, \bar{\partial}_2)$ to~$\bar{\partial}_1^*\otimes \bar{\partial}_2$. This map is equivariant with respect to based gauge groups, and descends to $\phi$ as a~map between orbit spaces. Thus the quotient bundle~$N'$ is the pull-back of the quotient bundle $V' = V''/ {\mathcal G}(L_1^* \otimes L_2)$ over $\operatorname{Pic}^{d_2-d_1}(\Sigma)$. The determinant~$\det(V')$ is the determinant line bundle~${\mathcal L}$ considered by Okonek--Teleman.
\end{proof}

As explained in \cite[Section~1]{OT}, ${\mathcal L}$ is isomorphic to the line bundle obtained by translating the geometric theta divisor $\Theta \subset \operatorname{Pic}^{g-1}(\Sigma)$ by $[(d_2-d_1-g+1)p_0] \in \operatorname{Pic}(\Sigma)$. If we choose a real base point $p_0 \in \Sigma^{\tau}$, then this divisor is sent to itself by $\tau$, inducing a real structure on ${\mathcal L}$. The Stiefel--Whitney class of ${\mathcal L}^{\tau}$ was calculated in Theorem~4.15 of~\cite{OT} and the following lemma is a~direct corollary.

\begin{Lemma}\label{orientability}
Suppose that $(\Sigma,\tau)$ is a real curve with real base point $p_0 \in \Sigma^{\tau}$, and suppose that $g +d$ is odd.
\begin{itemize}\itemsep=0pt
\item[$(i)$]If $(\Sigma,\tau)$ is type I, then $w_1( {\mathcal L}^{\tau})$ vanishes only on the path component $M(L, \tilde{\tau}_L) {\subseteq}\! \operatorname{Pic}^{d_2{-}d_1}\!(\Sigma)^{\tau}$ for which the restrictions $L^{\tilde{\tau}_L}|_{S^1}$ are nonorientable for all real circles $S^1 \subseteq \Sigma^{\tilde{\tau}}$.	
\item[$(ii)$] If $(\Sigma,\tau)$ is type~II, then $w_1( {\mathcal L} ^{\tau})$ does not vanish on any path component of $\operatorname{Pic}^{d_2-d_1}(\Sigma)^{\tau}$.
\end{itemize}
\end{Lemma}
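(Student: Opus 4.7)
The plan is to deduce the lemma as a direct application of \cite[Theorem~4.15]{OT}. By Lemma~\ref{themapphi} and the preceding discussion, $\det(N')$ is the pull-back under $\phi$ of the Okonek--Teleman determinant line bundle ${\mathcal L}$, which is obtained from the geometric theta divisor $\Theta \subset \operatorname{Pic}^{g-1}(\Sigma)$ by translation by $[(d_2-d_1-g+1)p_0]$. Since $p_0 \in \Sigma^{\tau}$, this translating class is real, so $\tau$ preserves the divisor and induces a real structure on ${\mathcal L}$. By the Schur-type uniqueness mentioned above, this real structure coincides up to $C^{\infty}$-isomorphism with the one analyzed by Okonek--Teleman, so its first Stiefel--Whitney class may be read off from their formula.

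The hypothesis $g+d$ odd makes the degree shift $d_2-d_1-g+1$ odd, which is precisely the range in which \cite[Theorem~4.15]{OT} produces a non-trivial expression for $w_1({\mathcal L}^{\tau})$ on each component of $\operatorname{Pic}^{d_2-d_1}(\Sigma)^{\tau}$. For a type~I curve, their formula restricts to a linear functional on the $2^{a-1}$ components (indexed by the $C^{\infty}$-types $\tilde{\tau}_L$ of Proposition~\ref{arecbu}), and I would verify by direct inspection that this functional vanishes on exactly the unique component for which $L^{\tilde{\tau}_L}|_{S^1}$ is non-orientable on every real circle $S^1 \subseteq \Sigma^{\tau}$. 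For a type~II curve, the corresponding expression picks up an additional parity contribution, coming from the fact that $\Sigma\setminus\Sigma^{\tau}$ is connected, which cannot be cancelled by any choice of $\tilde{\tau}_L$; this yields the non-vanishing asserted in~(ii).

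The main obstacle is the careful matching of conventions between \cite{OT} and the present set-up: confirming that the degree shift $d_2-d_1-g+1$ enters their formula in the expected way, and checking by a small parity computation that the unique vanishing component in~(i) is precisely the ``all non-orientable'' one rather than, say, the ``all orientable'' one. Once these book-keeping details are aligned, both statements read off directly from \cite[Theorem~4.15]{OT}.
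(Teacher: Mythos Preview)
Your approach matches the paper's exactly: the paper states the lemma as ``a direct corollary'' of \cite[Theorem~4.15]{OT} and gives no further argument, so your plan to read both (i) and (ii) off that formula after aligning conventions is precisely what is intended. One small slip to fix when you carry out the book-keeping you flagged: since $d_2-d_1=d-2d_1$, the shift $d_2-d_1-g+1$ has the parity of $d+g+1$, hence is \emph{even} (not odd) under the hypothesis $g+d$ odd; be sure this is the parity that activates the relevant term in the Okonek--Teleman formula.
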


Finally, we identify which path components correspond to which real structure on~$E$.

\begin{Lemma}\label{orientSWstuff}
Let $(E,\tilde{\tau}) \rightarrow (\Sigma,\tau)$ be a rank two $C^{\infty}$-real bundle. The map $\phi$ sends all path components of $\operatorname{Pic}^{d_1}(\Sigma)^{\tau} \times \operatorname{Pic}^{d_2}(\Sigma)^{\tau}$ corresponding to an unstable stratum of ${\mathcal C}^{\tilde{\tau}}$ to a~unique path component of $M(L, \tilde{\tau}_L) \subseteq \operatorname{Pic}^{d_2-d_1}(\Sigma)^{\tau}$ for which the Stiefel--Whitney classes agree:{\samepage \begin{gather*}w_1\big( L^{\tilde{\tau}_{L}}\big) = w_1\big(E^{\tilde{\tau}}\big)\end{gather*}
in $H^1(\Sigma^{\tau}; {\mathbb Z}/2)$.}
\end{Lemma}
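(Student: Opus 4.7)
The plan is to unpack what the unstable stratum ${\mathcal C}_{d_1}^{\tilde\tau}$ looks like on a bundle, extract a Real smooth splitting $E \cong L_1 \oplus L_2$, and then deduce the claimed equality of Stiefel--Whitney classes from Whitney multiplicativity on the fixed locus $\Sigma^\tau$. First I would observe that for any operator in ${\mathcal C}_{d_1}^{\tilde\tau}$, the SCSS line subbundle ${\mathcal L}_1 \subset {\mathcal E}$ is necessarily $\tilde\tau$-invariant: it is the \emph{unique} line subbundle of maximal degree $d_1 > d/2$, and $\tilde\tau$ carries holomorphic subbundles to holomorphic subbundles of the same degree. Consequently $({\mathcal L}_1, \tilde\tau_1)$ and the quotient $({\mathcal L}_2, \tilde\tau_2) := ({\mathcal E}/{\mathcal L}_1, \tilde\tau)$ are Real holomorphic line bundles, and under $\phi$ the pair $([{\mathcal L}_1], [{\mathcal L}_2])$ is sent to $[{\mathcal L}_1^* \otimes {\mathcal L}_2] \in \operatorname{Pic}^{d_2-d_1}(\Sigma)^\tau$ equipped with the induced Real structure $\tilde\tau_L = \tilde\tau_1^* \otimes \tilde\tau_2$.

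Next I would forget the holomorphic structure to obtain a $C^\infty$-splitting of Real bundles. Averaging an arbitrary Hermitian metric on $E$ over the ${\mathbb Z}_2$-action generated by $\tilde\tau$ produces a $\tilde\tau$-invariant Hermitian metric, with respect to which the orthogonal complement of the $\tilde\tau$-invariant subbundle $L_1$ is itself $\tilde\tau$-invariant; this yields a decomposition $E \cong L_1 \oplus L_2$ of Real smooth bundles. Restricting to $\Sigma^\tau$ gives $E^{\tilde\tau} \cong L_1^{\tilde\tau_1} \oplus L_2^{\tilde\tau_2}$, hence $w_1(E^{\tilde\tau}) = w_1(L_1^{\tilde\tau_1}) + w_1(L_2^{\tilde\tau_2})$. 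Similarly $L^{\tilde\tau_L} \cong (L_1^{\tilde\tau_1})^* \otimes L_2^{\tilde\tau_2}$ over $\Sigma^\tau$, and since the first Stiefel--Whitney class of a real line bundle is preserved by duality and additive under tensor product,
\begin{gather*}
w_1\big(L^{\tilde\tau_L}\big) = w_1\big(L_1^{\tilde\tau_1}\big) + w_1\big(L_2^{\tilde\tau_2}\big) = w_1\big(E^{\tilde\tau}\big).
\end{gather*}

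Finally, by Proposition~\ref{arecbu} the path components of $\operatorname{Pic}^{d_2-d_1}(\Sigma)^\tau$ are distinguished by their first Stiefel--Whitney class (the parity condition $d_2 - d_1 \equiv d \pmod{2}$ being automatic), so since $w_1(E^{\tilde\tau})$ depends only on the Real $C^\infty$-type of $(E,\tilde\tau)$ and not on the particular extension, every path component of $\operatorname{Pic}^{d_1}(\Sigma)^\tau \times \operatorname{Pic}^{d_2}(\Sigma)^\tau$ corresponding to ${\mathcal C}^{\tilde\tau}_{d_1}$ lands in the single component $M(L, \tilde\tau_L)$ picked out by this class. The step I expect to be the principal obstacle is the $\tilde\tau$-invariance of the SCSS subbundle in the first paragraph; this is treated in \cite[Section~2.2]{B} and is implicit in the very construction of the real stratification~\eqref{rHNstrat}, but it is the only substantive input. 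Once granted, the remaining Stiefel--Whitney computation is entirely routine bookkeeping over the disjoint union of circles $\Sigma^\tau$.
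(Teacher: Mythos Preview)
Your proposal is correct and follows essentially the same approach as the paper: obtain a Real $C^\infty$-splitting $(E,\tilde\tau)\cong(L_1\oplus L_2,\tilde\tau_1\oplus\tilde\tau_2)$ and then compute that $w_1(E^{\tilde\tau})=w_1(L_1^{\tilde\tau_1})+w_1(L_2^{\tilde\tau_2})=w_1((L_1^*\otimes L_2)^{\tilde\tau_1^*\otimes\tilde\tau_2})$. The paper's proof consists of exactly this one-line Stiefel--Whitney identity, taking the Real splitting for granted from the ambient setup, whereas you spell out the justification (invariance of the SCSS subbundle, invariant metric, Proposition~\ref{arecbu}) more explicitly.
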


\begin{proof}
If $(E,\tilde{\tau})$ decomposes as a sum of real bundles $(L_1 \oplus L_2, \tilde{\tau}_1 \oplus \tilde{\tau}_2)$, then
 \begin{gather*}
 w_1\big(E^{\tilde{\tau}} \big) = w_1\big((L_1 \otimes L_2)^{(\tilde{\tau}_1 \otimes \tilde{\tau}_2)}\big) = w_1\big((L_1^* \otimes L_2)^{(\tilde{\tau}_1^* \otimes \tilde{\tau}_2)}\big).\tag*{\qed}
 \end{gather*}
\renewcommand{\qed}{}
\end{proof}

\begin{proof}[Proof of Proposition~\ref{orientableProp}]
Suf\/f\/icient condition 1 follows from Lemma~\ref{g+deven}.

For suf\/f\/icient conditions 2 and 3, consider the restriction of the map $\phi$ in Lemma~\ref{themapphi} to path components of $\tau$ f\/ixed point sets.
\begin{gather*} \phi'\colon \ M(1,d_1, \tilde{\tau}_1) \times M(1,d_2,\tilde{\tau}_2) \rightarrow M(1,d_2-d_1, \tilde{\tau}_1^* \otimes \tilde{\tau}_2).\end{gather*}
Choose a f\/ixed element $[D_1] \in M(1,d_1, \tilde{\tau}_1)$. Then the map
\begin{gather*}\psi\colon \ M(1,d_2-d_1, \tilde{\tau}_1^* \otimes \tilde{\tau}_2) \rightarrow M(1,d_1, \tilde{\tau}_1) \times M(1,d_2,\tilde{\tau}_2)\end{gather*}
sending $\psi( [D]) = ( [D_1], [D +D_1])$ is a left inverse of $\phi'$ (i.e., $\phi' \circ \psi$ is the identity map on $M(1,d_2-d_1, \tilde{\tau}_1^* \otimes \tilde{\tau}_2 )$. It follows then that $\phi'$ induces an injection on cohomology. In particular, the pullback of a nonorientable vector bundle by $\phi'$ must be nonorientable. The result now follows from Lemmas~\ref{orientability} and~\ref{orientSWstuff}.

For the converse statement, we have a type I curve equipped with a Real bundle $(E,\tilde{\tau})$ such that the genus and degree have opposite parity and $w_1(E^{\tilde{\tau}})$ is non-vanishing on all path components of~$\Sigma^{\tau}$. From Lemmas~\ref{orientability} and~\ref{orientSWstuff}, we f\/ind that $N'^{\tau}$ is the pullback of an orientable bundle, hence is orientable. Finally, from the proof of Lemma~\ref{g+deven}, the quotient $N^{\tau} = N'^{\tau}/{\mathbb R}^*$ is also orientable.
\end{proof}

\section{Proof of Theorem \ref{isomcoh}}

The (path components of) unstable strata of the real Harder--Narasimhan stratif\/ication (\ref{rHNstratquot}) are homotopy equivalent to $X := (S^1)^{2g} \times {\mathbb R} P^{\infty} $. The contribution of that stratum into Morse theory is through the relative cohomology groups $H^*(N, N_0)$ where $N \rightarrow X$ is an ${\mathbb R}^n$-vector bundle (the normal bundle) and $N_0 \subset N$ is the complement of the zero section. If $N$ is orientable then we have the Thom isomorphism \begin{gather*}H^*(N, N_0) \cong H^{*-n}(X),\end{gather*}
but if $N$ is not orientable we instead get the following.

\begin{Proposition}\label{iszero}
Let $X := (S^1)^{2g} \times {\mathbb R} P^{\infty} $ and $k$ a field of characteristic $ \neq 2$. If $N \rightarrow X$ is a~nonorientable real vector bundle, then $H^*(N, N_0;k) = 0$.
\end{Proposition}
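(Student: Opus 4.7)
The plan is to convert the claim into a vanishing statement about cohomology of $X$ with twisted coefficients, by means of the Thom isomorphism, and then to compute that twisted cohomology via a Künneth splitting together with a direct calculation on each factor.

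First I would invoke the twisted Thom isomorphism: for any real rank $n$ vector bundle $N \to X$ one has
\[
 H^q(N, N_0; k) \cong H^{q-n}(X; k_{\rho_N}),
\]
where $k_{\rho_N}$ is the rank-one $k$-local system whose monodromy is the composition $\pi_1(X) \xrightarrow{w_1(N)} \mathbb{Z}/2 \hookrightarrow k^{\times}$. Because $\mathrm{char}(k) \neq 2$ the inclusion $\mathbb{Z}/2 \hookrightarrow k^{\times}$ is injective, so $\rho_N$ is nontrivial precisely when $N$ is nonorientable. Thus the proposition reduces to proving that $H^*(X; k_\rho) = 0$ for every nontrivial character $\rho \colon \pi_1(X) = \mathbb{Z}^{2g} \times \mathbb{Z}/2 \to \{\pm 1\}$.

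Next, I would apply the Künneth theorem with local coefficients (available since both factors have cohomology of finite type) to decompose such a $\rho$ as an external product $p_1^*\rho_1 \otimes p_2^*\rho_2$ and obtain
\[
 H^*(X; k_\rho) \cong H^*\bigl((S^1)^{2g}; k_{\rho_1}\bigr) \otimes_k H^*\bigl(\mathbb{R}P^\infty; k_{\rho_2}\bigr).
\]
If $\rho_2$ is nontrivial, then $H^*(\mathbb{R}P^\infty; k_{\rho_2}) = H^*(\mathbb{Z}/2; k_{\rho_2})$ vanishes identically: the degree-zero invariants are zero because $\mathbb{Z}/2$ acts by $-1$, and the higher group cohomology vanishes because $|\mathbb{Z}/2| = 2$ is a unit in $k$. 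If instead $\rho_2$ is trivial and $\rho_1$ is nontrivial, I would change basis on $\mathbb{Z}^{2g}$ (pick a primitive vector on which $\rho_1$ is nontrivial and extend it to a basis) so that $\rho_1$ is nontrivial on a single $\mathbb{Z}$-summand and trivial on the remaining $2g-1$ summands; a further Künneth reduction then leaves only $H^*(S^1; k_{\mathrm{sgn}})$, which the standard $\mathbb{Z}[t^{\pm 1}]$-resolution realises as the cohomology of the two-term complex $k \xrightarrow{-2} k$, and this vanishes because $-2$ is a unit in $k$.

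I do not expect any serious obstacle. The only point needing a moment's care is the verification that the local system appearing in the twisted Thom isomorphism is indeed the one associated to $w_1(N)$, which is classical.
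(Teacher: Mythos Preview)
Your argument is correct. Both you and the paper reduce to the statement $H^*(X;k_\rho)=0$ for a nontrivial character $\rho\colon\pi_1(X)\to\{\pm1\}$; you quote the twisted Thom isomorphism directly, whereas the paper rederives it via the long exact sequence of the pair and the Serre spectral sequence of the sphere bundle $N_0\to X$, but this is essentially the same reduction.

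Where the two proofs genuinely diverge is in computing $H^*(X;k_\rho)$. The paper argues uniformly: it passes to the double cover $\tilde X\to X$ determined by $\rho$, uses the transfer isomorphism $H^*(\tilde X;k)\cong H^*(X;k)\oplus H^*(X;k_\rho)$, observes that $\tilde X$ is a $K(\Gamma,1)$ for an index-two subgroup $\Gamma\subset\mathbb{Z}^{2g}\times\mathbb{Z}/2$, and notes that any such $\Gamma$ is isomorphic to $\mathbb{Z}^{2g}\times\mathbb{Z}/2$ or $\mathbb{Z}^{2g}$, in either case giving $H^*(\tilde X;k)\cong H^*(X;k)$ and hence $H^*(X;k_\rho)=0$. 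Your approach instead K\"unneth-splits the product and computes factor by factor, with a small case analysis on whether the $\mathbb{R}P^\infty$ component of $\rho$ is trivial. The paper's route is a little slicker in that it avoids cases and the basis-change maneuver on $\mathbb{Z}^{2g}$; your route is more explicit and makes transparent exactly where the hypothesis $\mathrm{char}(k)\neq 2$ enters (the units $2$ and $-2$). Both are perfectly valid and of comparable length.
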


\begin{proof}
Suppose that $N \rightarrow X$ is a nonorientable vector bundle of rank $n+1$. Using the long exact sequence of the pair, it is equivalent to show that the inclusion induced map
\begin{gather}\label{toproveiso}
H^*(X) = H^*(N) \rightarrow H^*(N_0)
\end{gather} is an isomorphism.

The cohomology of the f\/ibre $H^*({\mathbb R}^{n+1} \setminus 0) \cong H^*(S^n)$ is $k$ in degree $0$ and $n$ and is zero otherwise. Furthermore, the action of $\pi_1(X)$ on $H^0(S^n)$ is trivial and on $H^n(S^n)$ factors through a non-trivial homomorphism $ \rho\colon \pi_1(X) \rightarrow {\mathbb Z}/2$ because~$N$ is nonorientable. Thus if we denote by~$k_{\rho}$ the locally constant $k$-sheaf twisted by $\rho$, then the Serre spectral sequence attached to the f\/ibre bundle $N_0 \rightarrow X $
has $E_2$-page satisfying $E_2^{0,q} \cong H^q(X;k)$, $E_2^{n,q} \cong H^q(X;k_{\rho})$ and $E_2^{p,q} =0$ for $p\neq 0, n$. If we prove that $H^*(X;k_{\rho})=0$, then the spectral sequence collapses and~(\ref{toproveiso}) is an isomorphism.

Let $\tilde{X} \rightarrow X$ denote the double cover of $X$ def\/ined by $ \rho\colon \pi_1(X) \rightarrow {\mathbb Z}/2$. Because we are working in a characteristic other than two, the transfer map def\/ines an isomorphism
\begin{gather}\label{contro}
H^*(\tilde{X};k) \cong H^*(X;k) \oplus H^*(X; k_{\rho}),
\end{gather}
where the direct sum decomposition is into the $\pm 1$-eigenspaces under the action by the deck transformation group ${\mathbb Z}/2$.

Since $X = K({\mathbb Z}^{2g}\times {\mathbb Z}_2, 1)$ is an Eilenberg--MacLane space, $\tilde{X} = K(\Gamma, 1)$ for an index two subgroup $ \Gamma \subset {\mathbb Z}^{2g}\times {\mathbb Z}_2$ which by the classif\/ication of f\/initely generated abelian groups is isomorphic either to $ {\mathbb Z}^{2g} \times {\mathbb Z}_2$ or $ {\mathbb Z}^{2g}$. In either case, \begin{gather*}H^*(\tilde{X};k) \cong H^*(K(\Gamma,1);k) \cong H^*(X; k), \end{gather*} which combined with~(\ref{contro}) implies that $H^*(X; k_{\rho})=0$.
\end{proof}

\begin{proof}[Proof of Theorem \ref{isomcoh}]
Suppose that $(E,\tilde{\tau})$ is a rank two, real $C^{\infty}$-vector bundle over a real curve $(\Sigma, \tau)$. Using the stratif\/ication~(\ref{rHNstratquot}), we construct the f\/iltration
\begin{gather*} \big({\mathcal C}_{ss}^{\tilde{\tau}}\big)_{h\overline{{\mathcal G}^{\tilde{\tau}}}} = Y_0 \subseteq Y_1 \subseteq \dots \subseteq Y_{\infty} = B\overline{{\mathcal G}^{\tilde{\tau}}}, \end{gather*}
where \begin{gather*} Y_i = \big({\mathcal C}_{ss}^{\tilde{\tau}}\big)_{h\overline{{\mathcal G}^{\tilde{\tau}}}} \cup \bigg( \bigcup_{ d/2+ i \geq d_1} ({\mathcal C}_{d_1}^{\tilde{\tau}})_{h\overline{{\mathcal G}^{\tilde{\tau}}}} \bigg).\end{gather*}
By excision, for each $i$, \begin{gather*}H^*(Y_i, Y_{i-1};k) \cong H^*(N_i, (N_i)_0; k),\end{gather*}
where $N_i$ is the normal bundle of an unstable stratum. If the normal bundles of all positive codimension strata in the real Harder--Narasimhan stratif\/ication are nonorientable, then it follows from Proposition~\ref{iszero} that $H^*(Y_i, Y_{i-1};k) = 0$ and thus that inclusion induces an isomorphism \begin{gather*}H^*(Y_{i-1};k) \cong H^*(Y_{i};k). \end{gather*}
Since this holds for all $i\geq 1$, it follows by induction that
\begin{gather*} H^*_{\overline{{\mathcal G}^{\tilde{\tau}}}}\big( {\mathcal C}_{ss}^{\tilde{\tau}};k\big) = H^*(Y_0;k) \cong H^*(Y_{\infty};k) = H^*\big(B\overline{{\mathcal G}^{\tilde{\tau}}};k\big). \end{gather*}
If $E$ has odd degree, the action of $\overline{{\mathcal G}^{\tilde{\tau}}}$ is free, so \begin{gather*}H^*_{\overline{{\mathcal G}^{\tilde{\tau}}}}\big( {\mathcal C}_{ss}^{\tilde{\tau}};k\big) \cong H^*\big( {\mathcal C}_{ss}^{\tilde{\tau}}/\overline{{\mathcal G}^{\tilde{\tau}}};k\big) = H^*(M(E,\tilde{\tau});k).\end{gather*}

Finally, we relate equivariant cohomology with respect to ${\mathcal G}^{\tilde{\tau}} $ and $\overline{{\mathcal G}^{\tilde{\tau}}} = {\mathcal G}^{\tilde{\tau}}/{\mathbb R}^*$. Given any ${\mathcal G}^{\tilde{\tau}}$-space $X$ on which the subgroup ${\mathbb R}^*$ acts trivially, the natural map on Borel constructions
\begin{gather*} E{\mathcal G}^{\tilde{\tau}} \times_{{\mathcal G}^{\tilde{\tau}}} X \rightarrow E\overline{{\mathcal G}^{\tilde{\tau}}} \times_{\overline{{\mathcal G}^{\tilde{\tau}}}} X \end{gather*}
has homotopy f\/ibre $B{\mathbb R}^* = {\mathbb R} P^{\infty}$. Since $H^*(B{\mathbb R}^*;k)$ is acyclic for coef\/f\/icient f\/ields $k$ of characteristic not equal to~$2$, the Serre spectral sequence is trivial, yielding the isomorphism
\begin{gather*}H^*_{{\mathcal G}^{\tilde{\tau}}}( X;k) = H^*\big( E{\mathcal G}^{\tilde{\tau}} \times_{{\mathcal G}^{\tilde{\tau}}} X; k\big) \cong H^*\big( E\overline{{\mathcal G}^{\tilde{\tau}}} \times_{\overline{{\mathcal G}^{\tilde{\tau}}}} X; k\big) = H^*_{\overline{{\mathcal G}^{\tilde{\tau}}}}( X;k) . \end{gather*}
Applying this isomorphism when $X = {\mathcal C}_{ss}^{\tilde{\tau}}$ and when $X$ is a point completes the proof.
\end{proof}

\section{Real gauge groups}\label{SScalcReal}

In this section, we compute the cohomology ring of the classifying space $B{\mathcal G}^{{\tilde{\tau}}}$. Recall that the Poincar\'e series of a space $X$ is the generating function for its Betti numbers \begin{gather*} P_t(X) = \sum_{i=0}^{\infty} b_it^i, \end{gather*}
where $b_i = \dim(H^i(X;k))$. The goal of this section is to prove the following.

\begin{Theorem}\label{bgring}
Let $(\Sigma, \tau)$ be a real curve of genus $g$ with real points $($i.e., $\Sigma^{\tau}\neq \varnothing)$. Let ${\mathcal G}^{\tilde{\tau}} = {\mathcal G}(E,\tilde{\tau})$ be the real gauge group of a~rank~$2$ Real bundle $(E,\tilde{\tau})$ over $(\Sigma, \tau)$, and let $k$ be a~coefficient field $k$ of odd or zero characteristic. Then $H^*(B{\mathcal G}^{{\tilde{\tau}}}; k)$ is an exterior algebra with Poincar\'e series \begin{gather*}P_t(B{\mathcal G}^{\tilde{\tau}}) =(1+t)^g\big(1+t^3\big)^{g-1},\end{gather*}
except in the following two special cases:
\begin{enumerate}\itemsep=0pt
	\item[$1.$] If the real structure on the curve is of type I and the restriction of $E^{{\tilde{\tau}}}$ to each component of $\Sigma^{\tau}$ is nonorientable, then $H^*(B{\mathcal G}^{\tilde{\tau}};k)$ is a free, graded commutative algebra with $P_t(B{\mathcal G}^{{\tilde{\tau}}}) = (1+t)^g(1+t^3)^{g}/(1-t^2).$
	
	\item[$2.$] If the restriction of $E^{{\tilde{\tau}}}$ to each component of $\Sigma^{\tau}$ is orientable, then $P_t(B{\mathcal G}^{{\tilde{\tau}}}) = (1+t)^g(1+t^3)^{g}/(1-t^4)$. Furthermore, if $k$ has characteristic zero, then $H^*(B{\mathcal G}^{\tilde{\tau}};k)$ is a free, graded commutative algebra.
\end{enumerate}
\end{Theorem}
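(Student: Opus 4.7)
The plan is to carry out the Eilenberg--Moore computation promised in the introduction, by f\/irst realizing $B{\mathcal G}^{\tilde{\tau}}$ as an equivariant mapping space. Standard arguments identify $B{\mathcal G}^{\tilde{\tau}}$ with the path component classifying $(E,\tilde{\tau})$ inside $\operatorname{Map}^{{\mathbb Z}_2}((\Sigma,\tau),(BU(2),\mathrm{conj}))$, where ${\mathbb Z}_2$ acts on $BU(2)$ by complex conjugation. Writing $\Sigma$ as the double $\Sigma = \Sigma_0 \cup_{\Sigma^{\tau}} \Sigma_0$ with $\partial \Sigma_0 = \Sigma^{\tau}$ (as in Section~\ref{constructtheclasspace}), equivariant maps from $\Sigma$ are the same as maps $\Sigma_0 \to BU(2)$ whose restriction to $\partial \Sigma_0$ lands in the f\/ixed set $BU(2)^{{\mathbb Z}_2}$. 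This produces a homotopy pullback square
\begin{gather*}
B{\mathcal G}^{\tilde{\tau}} \simeq \operatorname{Map}(\Sigma_0, BU(2)) \times_{\operatorname{Map}(\Sigma^{\tau}, BU(2))} \operatorname{Map}\big(\Sigma^{\tau}, BU(2)^{{\mathbb Z}_2}\big)_{\tilde{\tau}|_{\Sigma^{\tau}}},
\end{gather*}
where the last factor selects the components of the f\/ixed-set mapping space determined by $\tilde{\tau}|_{\Sigma^{\tau}}$.

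Next I would compute the cohomology of each of the three corners. Since $\Sigma_0$ deformation retracts onto a wedge of circles, $\operatorname{Map}(\Sigma_0, BU(2))$ is homotopy equivalent to a product of copies of $BU(2)$ and $U(2)$, with well-known cohomology. The Real component of $BU(2)^{{\mathbb Z}_2}$ is $BO(2)$, whose cohomology in odd or zero characteristic is the polynomial ring $k[p_1]$ with $|p_1|=4$; the restriction $H^*(BU(2)) \to H^*(BO(2))$ sends $c_1 \mapsto 0$ and $c_2 \mapsto -p_1$. Since $\Sigma^{\tau}$ is a disjoint union of $a$ circles, the remaining two corners decompose as products of $a$ free loop spaces, and the connected components of $\operatorname{Map}(S^1, BO(2))$ are distinguished by whether the corresponding real line bundle over $S^1$ is trivial or nonorientable, matching the classif\/ication used in Proposition~\ref{orientableProp}.

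I then feed these rings into the Eilenberg--Moore spectral sequence of the pullback, whose $E_2$-page is $\operatorname{Tor}$ over $H^*(\operatorname{Map}(\Sigma^{\tau}, BU(2)); k)$, and argue collapse at $E_2$ using the freeness of the input rings and the characteristic-not-$2$ hypothesis. In the generic case the restriction map kills both $c_1$ and $c_2$ at the basepoint, and the $\operatorname{Tor}$ yields the claimed exterior algebra on $g$ generators of degree $1$ and $g-1$ of degree $3$ (one fewer degree-$3$ generator than might be expected because a combination pairing with a surviving class is consumed in the Tor). In special case~(1) (type~I with all boundary restrictions nonorientable), an extra degree-$2$ polynomial generator survives, producing the factor $(1-t^2)^{-1}$. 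In special case~(2) (all restrictions orientable), the class $c_2$ survives on the boundary as $-p_1 \in H^4(BO(2))$, yielding the polynomial generator of degree~$4$ and the factor $(1-t^4)^{-1}$.

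The hardest part is the careful bookkeeping of the $\operatorname{Tor}$ calculation in each of the three cases: identifying exactly which components of $BU(2)^{{\mathbb Z}_2}$ appear (indexed by the Stiefel--Whitney class $w_1(E^{\tilde{\tau}})$ through Proposition~\ref{arecbu}), tracking the restriction maps on cohomology, and verifying that the polynomial generators in the special cases are genuinely free and not obstructed by higher dif\/ferentials or hidden relations. The free graded commutative claim in characteristic zero for case~(2) is particularly delicate, as it requires ruling out unexpected multiplicative relations between the degree-$4$ generator and the exterior generators, which amounts to showing that the relevant higher $\operatorname{Tor}$ terms vanish.
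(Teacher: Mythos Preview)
Your overall strategy---realize $B{\mathcal G}^{\tilde{\tau}}$ as a homotopy pullback of mapping spaces and run the Eilenberg--Moore spectral sequence---is exactly the paper's approach. The paper's pullback~(\ref{aoeurclorce2}) is the classifying-space version of yours: $\operatorname{BMaps}_0(\Sigma_0,U_2)\simeq\operatorname{Map}(\Sigma_0,BU(2))$, $BL_0U_2\simeq\operatorname{Map}_0(S^1,BU(2))$, and the real loop groups $BLU_2^{\tilde{\tau}_i}$ play the role of your boundary factor.

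There is, however, a genuine gap in your setup. You write $\Sigma=\Sigma_0\cup_{\Sigma^{\tau}}\Sigma_0$ with $\partial\Sigma_0=\Sigma^{\tau}$, and consequently your boundary corner is $\operatorname{Map}(\Sigma^{\tau},BU(2)^{{\mathbb Z}_2})$. This is only correct for type~I curves. For type~II, $\Sigma\setminus\Sigma^{\tau}$ is connected, so $\Sigma$ cannot be doubled along $\Sigma^{\tau}$; the paper's $\Sigma_0$ then has $n>a$ boundary circles, the extra $n-a$ of which are glued by the antipodal map. On those circles $\tau$ acts freely by a half-turn, and the equivariance condition on a map $f\colon\Sigma_0\to BU(2)$ is $f(\theta+\pi)=\overline{f(\theta)}$, \emph{not} that $f$ lands in $BO(2)$. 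The corresponding ``type~(c)'' real loop group is a twisted loop group $L_{\sigma}U_2$; the paper computes its cohomology separately (Lemma~\ref{realloopgroupscoh}(c)) and shows it coincides with that of $LO_2$. Without this, you cannot explain why, for a type~II curve with $E^{\tilde{\tau}}$ nonorientable on every real circle, one still lands in the generic case and not in special case~1.

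A second, related gap: you assert that the components of $\operatorname{Map}(S^1,BO(2))$ are indexed by $w_1$, but you never compute $H^*$ of the nonorientable component. This is the single input that drives the entire case split. The paper identifies that component with $BL_{\sigma}O_2$ for $\sigma$ an orientation-reversing reflection and shows (Lemma~\ref{realloopgroupscoh}(b)) that its cohomology is just $k$ in characteristic $\neq 2$. Once you know this, the Koszul--Tate/Tor computation is mechanical: a nonorientable circle kills $b_i,u_i,w_i$ simultaneously, so the degree-$4$ polynomial class $p$ survives iff no circle is of type~(b) (your case~2), while the degree-$2$ Tor class $z$ (resolving the relation $b_1+\cdots+b_n=0$) survives iff every circle is of type~(b) (your case~1). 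Your sketch gestures at the right answers but does not supply the two loop-group computations that make them come out.
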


\subsection{Constructing the classifying space}\label{constructtheclasspace}

Let $(\Sigma, \tau)$ be a real curve of genus $g$ and let $ (E,{\tilde{\tau}}) \rightarrow (\Sigma,\tau)$ be a rank two Real $C^{\infty}$-vector bundle. In this subsection, we construct the classifying space $B{\mathcal G}(E,\tilde{\tau})$ as a homotopy pull-back. Instead of working with ${\mathcal G}(E,\tilde{\tau})$ directly, we work with the subgroup of unitary gauge transformations $\mathcal{U}(E,\tilde{\tau})$. The inclusion $\mathcal{U}(E,\tilde{\tau}) \hookrightarrow {\mathcal G}(E,\tilde{\tau})$ is a homotopy equivalence, so they are interchangeable for our purposes.

Up to homeomorphism, every real curve $(\Sigma, \tau)$ can be constructed as follows. Let $\Sigma_0$ be a~genus $\hat{g}$ surface with~$n$ boundary components, such that $2\hat{g}+n-1 = g$. Construct $\Sigma$ by taking two copies of~$\Sigma_0$ with opposite orientations, and gluing them together along their boundaries, attaching $a \leq n$ boundary circles to their counterpart using the identity map, and attaching the rest using the antipodal map. The involution $\tau$ simply transposes these two copies of $\Sigma_0$. The resulting topological real curve has~$a$ f\/ixed point circles. We get a type~0 curve if $a=0$, a type~I curve if $a = n$, and a~type~II curve if $0< a< n$.

Given a Real bundle $(E,\tilde{\tau})$ over $(\Sigma,\tau)$, the unitary gauge symmetries that commute with $\tilde{\tau}$ (elements of $\mathcal{U}(E,{\tilde{\tau}})$) are determined by their restriction to $\Sigma_0= \Sigma(\hat g,n)$. On the boundary circles, the gauge symmetries restrict to transformations of three types:
\begin{itemize}\itemsep=0pt
\item[(a)] If $\tau|_{S^1}$ is the identity, and the restriction of $E^{\tilde{\tau}}$ to the circle is orientable, the gauge symmetries are maps from $S^1$ to $O(2)$.
\item[(b)] If $\tau|_{S^1}$ is the identity, and the restriction of $E^{\tilde{\tau}}$ to the circle is nonorientable, the gauge symmetries are those of the M\"obius ${\mathbb R}^2$-bundle.
\item[(c)] If $\tau|_{S^1}$ is a rotation by a half turn, our gauge symmetries satisfy $ g(\theta) = \overline{g(\theta + \pi)}$ where the bar means entry-wise complex conjugation.
\end{itemize}

In each case, the restrictions of the gauge transformations to the boundaries are the so called \emph{real loop groups} introduced in~\cite{B}. Let $LU_2^{{\tilde{\tau}}_i}$ denote the real loop group over the $i$th circle of the boundary of $\Sigma_0= \Sigma(\hat g,n)$, with def\/initions varying from circle to circle, according to the behaviour of $\tilde{\tau}$.

As explained in \cite[Section~6.1]{B}, the unitary real gauge group $\mathcal{U}(E,\tilde{\tau})$ can be constructed up to isomorphism as the pull-back ${\mathcal G}(\hat{g}, n; {\tilde{\tau}}_1,\dots,{\tilde{\tau}}_n)$ of topological groups
\begin{gather}\label{aoeurclorce1}
\begin{split}&
\xymatrix{ {\mathcal G}(\hat{g}, n; {\tilde{\tau}}_1,\dots,{\tilde{\tau}}_n) \ar[r] \ar[d] & \operatorname{Maps}_0(\Sigma(\hat{g},n), U_2) \ar[d]^{\pi}\\
\prod\limits_{i=1}^n LU_2^{{\tilde{\tau}}_i} \ar[r]^{\iota} & \prod\limits_{i=1}^n L_0U_2. }
\end{split}
\end{gather}
In this diagram
\begin{itemize}\itemsep=0pt
\item $\operatorname{Maps}_0(\Sigma(\hat{g}, n), U_2)$ is the group of maps from $\Sigma(\hat{g}, n)$ to $U_2$ that send each boundary circle of $\Sigma(\hat{g}, n)$ to a contractible loop in~$U_2$.
\item $L_0U_2$ is the group of contractible loops $f\colon S^1 \rightarrow U_2$ and $\pi$ is def\/ined by restriction from $\Sigma(\hat{g}, n)$ to $\partial \Sigma(\hat{g}, n)$.
\item The $LU_2^{{\tilde{\tau}}_i}$ are the real loop groups considered above, and $\iota$ is the product of inclusions $LU_2^{{\tilde{\tau}}_i} \hookrightarrow L_0U_2$.
\end{itemize}

 Applying the classifying space functor yields a homotopy pull-back square
\begin{gather}\label{aoeurclorce2}
\begin{split}
& \xymatrix{ B{\mathcal G}(\hat{g}, n; {\tilde{\tau}}_1,\dots,{\tilde{\tau}}_n) \ar[r] \ar[d] & \operatorname{BMaps}_0(\Sigma(\hat{g},n), U_2) \ar[d]^{B\pi}\\
\prod\limits_{i=1}^n BLU_2^{{\tilde{\tau}}_i} \ar[r]^{B\iota} & \prod\limits_{i=1}^n BL_0U_2. }
\end{split}
\end{gather}

Our strategy is to calculate $H^*(B{\mathcal G}(\hat{g}, n; {\tilde{\tau}}_1,\dots,{\tilde{\tau}}_n))$ using the Eilenberg--Moore spectral sequence of diagram~(\ref{aoeurclorce2}). We f\/irst need to understand the induced cohomology morphisms shown below
\begin{gather}\label{aoeurclorce...}
\begin{split}
& \xymatrix{ & H^*(\operatorname{BMaps}_0(\Sigma(\hat{g}, n), U_2)) \\
\bigotimes\limits_{i=1}^n H^*(BLU_2^{{\tilde{\tau}}_i}) & \ar[l]_{B\iota^*} \bigotimes\limits_{i=1}^n H^*(BL_0U_2) \ar[u]_{B\pi^*}. }
\end{split}
\end{gather}

Part of this was already calculated in \cite{B}.

\begin{Lemma}\label{Surfaceboundaries}
The map $B\pi^*$ fits into a commutative diagram
\begin{gather*}
\xymatrix{ \ar[r]^{\cong} A \otimes \frac{\Lambda(b_1,\dots,b_n)}{(b_1+\dots+b_n)} \otimes S(u,w) & H^*(\operatorname{BMaps}_0(\Sigma(\hat{g}, n), U_2)) \\
\bigotimes\limits_{i=1}^n \Lambda(b_i) \otimes S(u_i, w_i) \ar[u]^f \ar[r]^{\cong} & \bigotimes\limits_{i=1}^n H^*(BL_0U_2) \ar[u]^{B\pi^*}, } \end{gather*}
where $\deg(b_i) = 3$, $\deg(u_i) = \deg(u) =2$, $\deg(w_i) =\deg(w)=4$, $A$ is an exterior algebra with Poincar\'e series $P_t(A) = (1+t)^{2\hat{g}}(1+t^3)^{2\hat{g}}$, and $f(b_i) = b_i$, $f(u_i) =u$, $f(w_i) = w$.
\end{Lemma}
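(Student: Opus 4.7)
The plan is to identify both cohomology rings via Chern-class and slant-product constructions, and then verify commutativity with the map $f$ by naturality. For the bottom row, $BL_0U_2$ fibers over $BU_2$ with fiber $B\Omega_0 U_2 \simeq S^3$ coming from evaluation at a basepoint (using that in characteristic $\neq 2$, $U_2 \simeq U(1) \times SU(2)$ after quotienting by a finite subgroup invisible to cohomology). The Serre spectral sequence of this fibration collapses: the class $b := c_2/[S^1]$, defined as the slant product of $c_2$ of the universal bundle on $BL_0U_2 \times S^1$ with the fundamental class of $S^1$, provides the required permanent cycle of degree $3$, forcing $d_4 b = 0$. This yields $H^*(BL_0U_2;k) \cong \Lambda(b) \otimes S(u, w)$ with $u = c_1|_{\text{pt}}$ and $w = c_2|_{\text{pt}}$, giving the lower horizontal isomorphism.

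For the top row, I would follow the Atiyah--Bott template, combined with the results of \cite{B}, to describe $H^*(\operatorname{BMaps}_0(\Sigma(\hat{g},n), U_2))$ via slant products of $c_1, c_2$ of the universal bundle on $\operatorname{BMaps}_0 \times \Sigma(\hat{g}, n)$ with a basis of $H_*(\Sigma(\hat{g}, n))$. Choosing a basis consisting of a point, $2\hat{g}$ interior $1$-cycles, and the $n$ boundary cycles $[\partial_i]$ (subject to the single relation $\sum[\partial_i] = 0$ forced by the fact that the total boundary is null-homologous in the surface with boundary), one obtains: the polynomial generators $u, w$ of degrees $2, 4$; exterior generators of degrees $1$ and $3$ from $c_1, c_2$ slanted with the $2\hat{g}$ interior cycles, which together make up the algebra $A$ of Poincar\'e series $(1+t)^{2\hat{g}}(1+t^3)^{2\hat{g}}$; and the degree-$3$ classes $b_i := c_2/[\partial_i]$. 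Restriction to $\operatorname{Maps}_0$ kills the would-be degree-$1$ classes $c_1/[\partial_i]$, which measure boundary winding numbers and vanish on maps with null-homotopic boundary restrictions, while the $b_i$ survive subject only to the inherited relation $b_1 + \cdots + b_n = 0$.

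Commutativity of the diagram then follows directly from naturality of slant products under restriction to the boundary. The class $u_i \in H^2(BL_0U_2)$, being $c_1$ evaluated at a chosen base point of $S^1_i$, pulls back under $B\pi$ to $c_1$ evaluated at the corresponding point of $\Sigma(\hat{g},n)$; since evaluation at any two points of a connected surface gives cohomologous pullbacks in $H^*(\operatorname{BMaps}_0)$, all $u_i$ map to the single class $u$, and the same argument gives $w_i \mapsto w$. Likewise the slant product $c_2/[S^1_i]$ pulls back to $c_2/[\partial_i]$, which is the boundary generator $b_i$ in the target, the relation $\sum b_i = 0$ being automatic in the quotient algebra. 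The main obstacle is justifying the precise algebraic structure claimed for $H^*(\operatorname{BMaps}_0)$---the freeness of $A$ and the singleness of the boundary relation. The cleanest route is to invoke the calculation of \cite{B}; alternatively one argues via the Serre spectral sequence of the $\mathbb{Z}^{n-1}$-covering $\operatorname{BMaps}_0 \to \operatorname{BMaps}(\Sigma(\hat{g},n), U_2) \to T^{n-1}$ classified by boundary winding numbers, using the Atiyah--Bott description of $H^*(\operatorname{BMaps})$ and checking that the pullback from $H^*(T^{n-1})$ kills exactly the $(n-1)$ independent boundary degree-$1$ classes.
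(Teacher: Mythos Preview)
Your proposal is correct and aligns with the paper's approach: the paper simply cites \cite{B}, Lemma~4.4, noting that the proof given there over $\mathbb{Z}_2$ in fact works over $\mathbb{Z}$ and hence over any field. Your sketch---slant products of the universal Chern classes against a homology basis of $\Sigma(\hat g,n)$ to produce generators, the relation $\sum_i[\partial_i]=0$ giving the single boundary relation, and the evaluation fibration $B\Omega_0U_2\to BL_0U_2\to BU_2$ with fibre $S^3$ to identify $H^*(BL_0U_2)$---is exactly the Atiyah--Bott-style argument underlying that cited lemma, so you have essentially reproduced (in outline) what the paper defers to \cite{B}.
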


\begin{proof}
This is proven in Lemma~4.4 of~\cite{B} for ${\mathbb Z}_2$-coef\/f\/icients, but the proof actually works over~${\mathbb Z}$ and hence holds for any coef\/f\/icient f\/ield.
\end{proof}

The Koszul--Tate complex for the homomorphism $B\pi^*$ is identif\/ied with the bigraded complex $(K^{*,*},\delta)$ where
\begin{gather}\label{ktc}
 K^{*,*} := \Lambda(b_1,\dots,b_n, x_1,\dots,x_{n-1}, y_1,\dots,y_{n-1}) \otimes S(u_1,w_1,\dots, u_n,w_n) \otimes \Gamma(z)
\otimes A,
\end{gather}
where $\Gamma(z)$ is the divided power algebra generated by $z$, with bi-degrees and dif\/ferential $\delta\colon K^{i,j} \! \rightarrow K^{i+1, j}$ described in the table below.
\begin{center}
\begin{tabular}{|c|c|c|}
	\hline
generator & bi-degree & $\delta$-derivative \\
\hline
$u_i$ & $(0,2)$ & $0$ \\
$b_i$ & $(0,3)$ & $0$ \\
$w_i$ & $(0,4)$ & $0$\\
$x_i$ & $(-1,2)$ & $u_{i}-u_{n}$ \\
$y_i$ & $(-1,4)$ & $w_{i} -w_{n}$\\
$z$ & $(-1,3)$ & $b_1 +\dots+ b_n$\\
\hline
\end{tabular}
\end{center}

Note in particular that $K^{*,*}$ is a free module over $\bigotimes_{i=1}^n \Lambda(b_i) \otimes S(u_i, w_i)$ and the cohomology of $(K^{*,*},\delta) $ is isomorphic to $H^*(\operatorname{BMaps}_0(\Sigma(\hat{g}, n), U_2))$.

\subsection{Real loop groups}\label{orlecbularc}

Let $S^1 = {\mathbb R}/2\pi{\mathbb Z}$. Denote by $LU_2$ the group of continuous maps from $S^1$ into the unitary group~$U_2$ and by~$L_0U_2$ the subgroup of~$LU_2$ consisting of maps homotopic to a~constant map. The real loop groups $LU_2^{{\tilde{\tau}}_i}$ appearing in diagram (\ref{aoeurclorce...}) arise as subgroups of~$L_0U_2$. There are three kinds.
\begin{itemize}\itemsep=0pt
\item[(a)] $LU_2^{{\tilde{\tau}}_a} = LO_2$ sitting inside $LU_2$ in the standard way.
\item[(b)] $LU_2^{{\tilde{\tau}}_b}$ the gauge group of a rank two M\"obius bundle over $S^1$.
\item[(c)] $LU_2^{{\tilde{\tau}}_c} = \{ g\colon S^1 \rightarrow U_2 | g(\theta) = \overline{g(\theta + \pi)}\}$ where the bar means entry-wise complex conjugation.
\end{itemize}

There is one real loop group of type (a) for each real component of $\Sigma^{\tau}$ over which $E^{{\tilde{\tau}}}$ is trivial, one of type~(b) for each real component for which~$E^{{\tilde{\tau}}}$ is nonorientable, and a positive number of type (c) if and only if $\Sigma \setminus \Sigma^{\tau}$ is connected.

\begin{Lemma}\label{realloopgroupscoh}
Over coefficient fields $k$ of characteristic $\neq 2$ we have an isomorphism
\begin{gather*} H^*(BLU_2^{{\tilde{\tau}}};k) = \begin{cases} \Lambda (q) \otimes S(p) & \text{if ${\tilde{\tau}} = {\tilde{\tau}}_a$ or ${\tilde{\tau}}_c$}, \\
k & \text{if ${\tilde{\tau}} = {\tilde{\tau}}_b$}, \end{cases}\end{gather*}
where $\deg(q) = 3$ and $\deg(p) = 4$.
In the prior case, the restriction map \begin{gather*}\xymatrix{ H^*(BL_0U_2) \ar[r] \ar[d]^{\cong} & H^*(BLU_2^{{\tilde{\tau}}}) \ar[d]^{\cong} \\
\Lambda(b) \otimes S(u,w) \ar[r] & \Lambda(q) \otimes S(p) } \end{gather*}
sends $b$ to $q$, $w$ to $p$, and $u$ to $0$.
\end{Lemma}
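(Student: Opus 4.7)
The plan is to realize each of the three real loop groups via an explicit topological model, compute $H^*(B\,\cdot\,;k)$ through a $\mathbb{Z}/2$-invariants argument (since $\operatorname{char}k\neq2$, any Borel construction $Y_{h\mathbb{Z}/2}$ has cohomology equal to $H^*(Y;k)^{\mathbb{Z}/2}$ by degeneracy of the Serre spectral sequence), and then track the generators through the inclusions $LU_2^{\tilde{\tau}}\hookrightarrow L_0U_2$.

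First, for case (a), I would identify $LU_2^{\tilde{\tau}_a}=LO_2$ and use the split extension $1\to LSO_2\to LO_2\to\mathbb{Z}/2\to 1$, where the quotient $\mathbb{Z}/2$, generated by a fixed reflection, acts on $LSO_2$ by pointwise inversion. Because $SO_2\cong U_1$ is abelian, $LSO_2\cong U_1\times\mathbb{Z}$ as a topological group (split by evaluation at a basepoint and winding number), so $BLSO_2\simeq\mathbb{CP}^\infty\times S^1$. Hence $BLO_2\simeq(\mathbb{CP}^\infty\times S^1)_{h\mathbb{Z}/2}$, where the involution negates the generators $c\in H^2(\mathbb{CP}^\infty)$ and $\alpha\in H^1(S^1)$. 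The invariant subring $k[c^2]\otimes\Lambda(c\alpha)$ yields $p=c^2$ in degree $4$ and $q=c\alpha$ in degree $3$.

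For case (c), $LU_2^{\tilde{\tau}_c}$ is the space of $\mathbb{Z}/2$-equivariant maps $(S^1,\operatorname{rot}_{\pi})\to(U_2,\text{conjugation})$, equivalently the sections of the twisted bundle $S^1\times_{\mathbb{Z}/2}U_2\to S^1$. I would analyze the fibration $\Omega U_2\to LU_2^{\tilde{\tau}_c}\xrightarrow{\operatorname{ev}_0}U_2$ and show that the connecting map $\pi_1(U_2)\to\pi_0(\Omega U_2)$ is multiplication by $\pm 2$ (because the twist doubles the winding of the fiber loop), so $LU_2^{\tilde{\tau}_c}$ has two components and simply connected identity component; an analogous Borel model then produces the same invariant ring as in case (a). For case (b), the M\"obius gauge group $LU_2^{\tilde{\tau}_b}$ admits a similar equivariant description, but the relevant $\mathbb{Z}/2$ acts by inversion on an $S^1$ factor, killing all positive-degree invariants and leaving only $H^0=k$. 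I expect case (c) to be the main obstacle: the invariant calculation mirrors case (a), but establishing the correct Borel model requires careful handling of the equivariant section space, since $\operatorname{ev}_0$ admits no global section.

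Finally, for the restriction map in cases (a) and (c), the decomposition $L_0U_2\simeq U_2\ltimes\Omega SU_2$ gives $BL_0U_2\simeq BU_2\times S^3$ with $u=c_1$, $w=c_2$ pulled back from $BU_2$ and $b$ the degree-$3$ generator of $H^*(S^3)$. Under $LU_2^{\tilde{\tau}}\hookrightarrow L_0U_2$, the constants restrict to $O_2\hookrightarrow U_2$, so $u=c_1$ pulls back to the first Chern class of a complexified real vector bundle (which vanishes), $w=c_2$ pulls back to the Pontryagin class $p_1=c^2=p$, and naturality of the fiber-to-fiber map sends $b$ to $q=c\alpha$.
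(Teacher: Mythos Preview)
Your argument for case~(a) via the double cover $BLSO_2\to BLO_2$ and $\mathbb{Z}/2$-invariants is sound and gives the correct ring, though by a different route than the paper (which uses $BLO_2\simeq\operatorname{Maps}_0(S^1,BO_2)$ and slant products with $p_1$). Two later steps, however, do not go through as written.

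In case~(b) the mechanism you describe is wrong. You appear to assume the M\"obius gauge group has an index-two subgroup with the same shape as $LSO_2\simeq S^1\times\mathbb{Z}$, on which some $\mathbb{Z}/2$ acts with trivial invariants; but no $\mathbb{Z}/2$-action on $H^*(\mathbb{CP}^\infty\times S^1;k)$ has trivial invariant ring---your own case~(a) computation shows the sign involution leaves $k[c^2]\otimes\Lambda(c\alpha)$. What actually happens is that the twist collapses the homotopy type. Writing $LU_2^{{\tilde{\tau}}_b}\cong L_\sigma O_2$ with $\sigma$ conjugation by a reflection, the evaluation fibration $\Omega SO_2\to L_\sigma SO_2\to SO_2$ has connecting map $\pi_1(SO_2)\to\pi_0(\Omega SO_2)$ equal to multiplication by~$\pm 2$ (precisely the doubling phenomenon you noticed in case~(c)), so $L_\sigma SO_2\simeq\mathbb{Z}/2$ and $BL_\sigma SO_2\simeq B\mathbb{Z}/2$ is already $k$-acyclic before any further invariants are taken. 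The paper obtains this by citing a general theorem on twisted loop groups, and handles case~(c) the same way, by showing that the inclusion $LO_2\hookrightarrow L_\sigma U_2$ induces a $k$-cohomology isomorphism; your fibration sketch for~(c) does not supply a substitute for that theorem.

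Your justification that $b\mapsto q$ ``by naturality of the fiber-to-fiber map'' also fails. In the map of evaluation fibrations from $(S^1\to BLSO_2\to BSO_2)$ to $(S^3\to BL_0U_2\to BU_2)$, the fibre map is the maximal-torus inclusion $SO_2\hookrightarrow SU_2$, which is zero on $H^3$; fibrewise naturality therefore only says the pullback of $b$ lies in Leray filtration $F^1H^3(BLSO_2)$, which here equals all of $H^3$ and gives no information about nonvanishing. The paper's argument instead defines both $b$ and $q$ as slant products of $c_2$ and $p_1$ against $[S^1]$, so that $c_2\mapsto\pm p_1$ under $BO_2\to BU_2$ immediately forces $b\mapsto\pm q$. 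In your model this becomes $\operatorname{ev}^*(e)=1\otimes c+[S^1]^\vee\otimes\alpha$, hence $p_1/[S^1]=e^2/[S^1]=2\,c\alpha$, which recovers your $q$ up to the unit~$2$.
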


\begin{proof}
(a) We start with the case $LU_2^{{\tilde{\tau}}_a} = LO_2$ which is surely well known (the two-fold cover $BLSO_2 \rightarrow BLO_2$ reduces the problem to the case of $LSO_2 = LU_1$, which was considered in Lemma 4.4 of \cite{B} for characteristic 2, but the proof works over any f\/ield). There is a canonical homotopy equivalence
$BLO_2 \cong \operatorname{Maps}_0(S^1, BO_2)$ with the function space of maps homotopic to a constant map. Consider the evaluation map
\begin{gather*}\operatorname{ev}\colon \ S^1 \times \operatorname{Maps}_0\big(S^1, BO_2\big) \rightarrow BO_2.\end{gather*}
If $p_1\in H^4(BO_2;k)$ denotes the f\/irst Pontryagin class, then we have an isomorphism $ H^*(BLO_2)$ $\cong \Lambda(q) \otimes S(p)$ where $p = \int_{[pt]} \operatorname{ev}^*(p_1) \in H^4(BLO_2; k)$ and $q = \int_{[S^1]}\operatorname{ev}^*(p_1) \in H^3(BLO_2;k)$ where~$\int$ denotes slant product with respect to the homology classes $[pt], [S^1]$.

The two remaining cases can be realized up to isomorphism as twisted loop groups (see Baird~\cite{B2}). Let $I=[0,1]$ be the unit interval, let~$G$ be a compact Lie group, and let $\sigma \in \operatorname{Aut}(G)$ be an automorphism. Then the associated \emph{twisted loop group} is
\begin{gather*} L_{\sigma}G := \{ g\colon I \rightarrow G\,|\, g(0) = \sigma(g(1))\}. \end{gather*}

(b) We have $LU_2^{{\tilde{\tau}}_b} \cong L_{\sigma}O_2$ where $\sigma$ is an orientation reversing orthogonal change of basis. Note that $\sigma$ restricts to an automorphism of $SO_2$. It was proven in \cite[Proposition~7.6]{B2} that for coef\/f\/icient f\/ields of characteristic other than two, \begin{gather*}H^*(BL_{\sigma}SO_2) \cong H^*(BLSO_1) \cong H^*(B{\mathbb Z}_2) \cong k.\end{gather*}
 Since $L_{\sigma}SO_2 \subset L_{\sigma} O_2$ is a subgroup of index two, we deduce that $H^*(BL_{\sigma}O_2) \cong H^*(BL_{\sigma}SO_2)^{{\mathbb Z}_2}$ is the subring of $H^*(BL_{\sigma}SO_2)$ invariant under the residual action by ${\mathbb Z}_2 \cong L_{\sigma}O_2 /L_{\sigma}SO_2$. In particular, $H^*(BL_{\sigma}O_2) \cong k^{{\mathbb Z}_2} \cong k$.

(c) Consider the map $I \hookrightarrow S^1$ that embeds $I$ as a half circle. Then restriction determines an isomorphism
\begin{gather}\label{typec}
LU_2^{{\tilde{\tau}}_c} \cong L_{\sigma}U_2,
\end{gather}
where $\sigma$ is entry-wise complex conjugation. By \cite[Corollary~7.5]{B2}, the inclusions $LO_2 \subset L_{\sigma} U_2$ induces an isomorphism \begin{gather*} H^*( L_{\sigma} U_2) \cong H^*(BLO_2) \cong \Lambda (q) \otimes S(p).\end{gather*}
Because~(\ref{typec}) is induced by including $I$ as a half circle in $S^1$, we have a commutative diagram
\begin{gather*}\xymatrix{ H^*(BLO_2) &H^*(BLO_2) \ar[l]^{f^*} \\
H^*(BLU^{{\tilde{\tau}}_c}_2) \ar[u]^{\cong} & H^*(BLU_2) \ar[l]^{i^*} \ar[u], }\end{gather*}
where the vertical arrows are induced by inclusion and $f^*$ is induced by a 2-fold covering map $f\colon S^1 \rightarrow S^1$.
Functoriality properties of the slant product imply that
\begin{gather*} f^*(p) = \int_{f_*[pt]} ev^*(p_1)) = \int_{[pt]} ev^*(p_1)) = p , \\
 f^*(q) = \int_{f_*[S^1]} ev^*(p_1)) = \int_{2[S^1]} ev^*(p_1)) = 2 q .\end{gather*}
Since $2$ is invertible, we can simply relabel~$2q$ as~$q$ as an element of $H^*(BLU_2^{{\tilde{\tau}}_c})$ completing the proof.
\end{proof}

\subsection{The spectral sequence}

We refer the reader to \cite[Appendix~A]{B} or McLeary \cite[Section~7.1]{M} for background on the Eilenberg--Moore spectral sequence.

\begin{proof}[Proof of Theorem~\ref{bgring}]
Denote $B{\mathcal G}^{{\tilde{\tau}}} = B{\mathcal G}(\hat{g}, n; {\tilde{\tau}}_1,\dots,{\tilde{\tau}}_n)$ from the homotopy pull-back diag\-ram~(\ref{aoeurclorce...}). The associated Eilenberg--Moore spectral sequence $EM_r^{*,*}$ converges to $H^*(B{\mathcal G}^{{\tilde{\tau}}})$. The second page $EM_2^{*,*}$, equals the cohomology of the bi-graded dif\/ferential graded algebra $ (K^{*,*} \otimes_{R^*} M^*, \delta \otimes 1) $
where
\begin{itemize}\itemsep=0pt
	\item $(K^{*,*}, \delta)$ is the Koszul--Tate complex~(\ref{ktc}),
	\item $M^* = M^{0,*} := \bigotimes_{i=1}^n H^*(BLU_2^{{\tilde{\tau}}_i})$, and
	\item $R^* = R^{0,*}:= \bigotimes_{i=1}^n H^*(BL_0U_2) = \bigotimes_{i=1}^n \Lambda(b_i) \otimes S(u_i,w_i). $
\end{itemize}
Suppose that there are $a \leq n$ real circles above which the real bundle is nonorientable and set $b = n-a$. Then by Lemma \ref{realloopgroupscoh} we have
\begin{gather*} \bigotimes_{i=1}^n H^*(BLU_2^{{\tilde{\tau}}_i}) \cong \otimes_{i=1}^{b} \Lambda(q_i) \otimes S(p_i)\end{gather*}
and
\begin{gather*} K^{*,*} \otimes_{R^*} M^* \cong \Lambda(q_1,\dots,q_{b}, x_1,\dots,x_{n-1}, y_1,\dots,y_{n-1}) \otimes S(p_1,\dots,p_{b}) \otimes \Gamma(z) \otimes A,
\end{gather*}
which comes with dif\/ferential $\delta' = \delta \otimes 1$ satisfying $\delta'(q_i) = \delta'(p_i) = \delta'(x_i) = \delta'(A) = 0$, and
\begin{gather*}\delta'(y_i) = \begin{cases}
 p_i -p_n & \text{if $b=n$}, \\
 p_i & \text{if $b<n$ and $i\leq b$},\\
 0 & \text{if $b<n$ and $i > b$},
 \end{cases}\\
\delta(z) = \begin{cases} q_1 +\dots+q_{b} & \text{if $b >0$},
\\ 0 & \text{ if $b =0$}. \end{cases}
\end{gather*}

The cohomology $EM_2^{*,*} =H(K^{*,*} \otimes_{R^*} M^*, \delta')$ is easily calculated in all cases and can be expressed in the original variables by abuse of notation.

In case $0<b<n$, we have \begin{gather*}EM_2^{*,*} \cong \frac{\Lambda(q_1,\dots,q_b)}{(q_1+\dots+q_b)} \otimes \Lambda(x_1,\dots,x_{n-1},y_{b+1},\dots,y_{n-1}) \otimes A.\end{gather*}

In case $b=n$ (i.e., the real bundle restricts to a trivial bundle over all real circles), we have
\begin{gather*} EM_2^{*,*} \cong \frac{\Lambda(q_1,\dots,q_n)}{(q_1+\dots+q_n)} \otimes \Lambda(x_1,\dots,x_{n-1})\otimes S(p) \otimes A, \end{gather*}
where $p$ is represented by cocycle $p_1$.
In case $b=0$ (i.e., the real circle are separating and the real bundle restricts to nonorientable bundles for all of them), then $\delta' =0$ and we have
 \begin{gather*}EM_2^{*,*} \cong \Lambda(x_1,\dots,x_{n-1},y_{1},\dots,y_{n-1}) \otimes \Gamma(z) \otimes A.\end{gather*}

If $b>0$, then the bigraded algebra $EM_2^{*,*}$ is generated by homogenous elements lying in $EM_2^{0,q}$ or $EM_2^{-1,q}$ for some $q$ (i.e., the $0$th and $-1$th columns). For degree reasons the generators must survive until inf\/inity, so the spectral sequence must collapse. Since $EM_2^{*,*}$ is a free graded-commutative algebra and an associated graded algebra of~$H^*(B{\mathcal G}^{\tilde{\tau}})$, we deduce that $H^*(B{\mathcal G}^{\tilde{\tau}})$ is a free super commutative algebra isomorphic to~$EM_2^{*,*}$.

If $b=0$, then the preceding argument still works for coef\/f\/icient f\/ields of characteristic zero f\/ields, because in that case $\Gamma(z) = S(z)$. The universal coef\/f\/icient theorem then implies that the spectral sequence collapses for odd characteristic f\/ields as well.
\end{proof}

\section{Proofs of Theorem \ref{bigthm} and Corollary \ref{BigCor}}

\begin{proof}[Proof of Theorem \ref{bigthm}]
For the generic case, simply combine Theorems~\ref{isomcoh} and~\ref{bgring}, and Proposition~\ref{orientableProp}.

It remains to show that $H^*(M(E,\tilde{\tau});k)$ is not an exterior algebra if $(\Sigma,\tau)$ is type~I of even genus greater than two, $E$ has odd degree, and $E^{\tilde{\tau}}$ is nonorientable on all components of $\Sigma^{\tau}$. According to Theorem~\ref{bgring}, the cohomology ring $H^*(B{\mathcal G}^{\tilde{\tau}})$ has Poincar\'e series
\begin{gather}\label{quickref}
P_t\big(B{\mathcal G}^{\tilde{\tau}}\big) = (1+t)^g\big(1+t^3\big)^g/\big(1-t^2\big).
\end{gather} According to Proposition \ref{orientableProp}, the normal bundles of all strata are orientable, so the Thom spaces satisfy the Thom isomorphism. Since the unstable strata all have codimension greater than $(g-1)$ (see Section~\ref{realstratsect}), then consideration of Thom--Gysin sequences yields equalities for Betti numbers:
\begin{gather*}\dim(H^i(M(E,\tilde{\tau}))) = \dim(H^i( B{\mathcal G}^{\tilde{\tau}})),\qquad \text{for $i\leq g-2$}.\end{gather*}
In particular, if $g\geq 4$ then the f\/irst and second Betti numbers of $M(E,\tilde{\tau})$ and $B{\mathcal G}^{\tilde{\tau}}$ must agree. From~(\ref{quickref}), the f\/irst Betti number of $B{\mathcal G}^{\tilde{\tau}}$ is $g$ and the second is ${ g \choose 2}+1$ which implies that $H^*(M(E,\tilde{\tau});k)$ cannot possibly be an exterior algebra.
\end{proof}

Lastly, we explain the relationship between the cohomology of $M(r,d,\tilde{\tau})$ and that of the f\/ixed determinant moduli space $M_{\Lambda}(r,d,\tilde{\tau})$. This is analogous to the complex version described in Atiyah--Bott~\cite[p.~578]{AB}.

Consider the trivial $C^{\infty}$-line bundle $\Sigma \times {\mathbb C}$, with trivial real structure \begin{gather*}\bar{c}(x,z) = (\tau(x),\bar{z}).\end{gather*}
The moduli space $M(1,0,\bar{c})$ is a group isomorphic to $(S^1)^g$. The subgroup $T_r \leq M(1,0,\bar{c})$ of $r$-th roots of unity acts by tensor product on $M_{\Lambda}(r,d,\tilde{\tau})$. The tensor product map $ M_{\Lambda}(r,d,\tilde{\tau}) \times M(1,0, \bar{c}) \rightarrow M(r,d,\tilde{\tau})$ is a covering space with transitive deck transformation group $T_r$, so it descends to a dif\/feomorphism
\begin{gather*} M(r,d,\tilde{\tau}) \cong M_{\Lambda}(r,d,\tilde{\tau}) \times_{T_r} M(1,0, \bar{c}),\end{gather*}
where we have taken the mixed quotient with respect to the tensor product action of $T_r$ on $M_{\Lambda}(r,d,\tilde{\tau})$ and $M(1,0, \bar{c})$. Since $T_r$ acts by translations on $M(1,0, \bar{c})$, it acts trivially on cohomology. It follows that for characteristic relatively prime to $r$, we have a ring isomorphism
	\begin{gather*} H^*(M(r,d,\tilde{\tau});k) \cong H^*(M_{\Lambda}(r,d,\tilde{\tau});k)^{T_r} \otimes H^*(M(1,0, \bar{c});k ). \end{gather*}
	Finally, $M(1,0, \bar{c})$ is dif\/feomorphic to $(S^1)^g$, so $H^*(M(1,0, \bar{c});k )$ is an exterior algebra with $g$ generators in degree one. Corollary \ref{BigCor} follows.

\subsection*{Acknowledgements}
A special thanks to Jacques Hurtubise and Ben Smith who began this project as collaborators and contributed to some of the exposition. Jacques in particular helped motivate this project by establishing criteria for the normal bundles of the real Harder--Narasimhan stratif\/ication to be non-orientable (a proof later superseded by the work of Okonek--Teleman). Thanks also to Andrei Teleman and other the participants at the Real vector bundles conference in Brest for helpful discussions, and to the referees for helpful comments. This research was supported by an NSERC Discovery Grant.

\pdfbookmark[1]{References}{ref}
\LastPageEnding

\end{document}